
\documentclass[journal,transmag]{IEEEtran}
\usepackage{amsmath}
\usepackage{amssymb}
\usepackage{graphicx}
\usepackage{color}

\newtheorem{definition}{Definition}
\newtheorem{lemma}{Lemma}
\newtheorem{remark}{Remark}
\newtheorem{theorem}{Theorem}

\newtheorem{proof}{Proof}
\newtheorem{example}{Example}





\begin{document}



\title{\LARGE \bf
Delay-robust control design for two heterodirectional linear coupled hyperbolic PDEs
}

\author{\IEEEauthorblockN{Jean Auriol\IEEEauthorrefmark{1}, Ulf Jakob F. Aarsnes\IEEEauthorrefmark{2}, Philippe Martin\IEEEauthorrefmark{1}
and Florent Di Meglio\IEEEauthorrefmark{1}}
\IEEEauthorblockA{\IEEEauthorrefmark{1} MINES ParisTech, PSL Research University, CAS - Centre automatique et syst\`emes, 60 bd St Michel 75006 Paris, France  \quad \quad \quad }%
\IEEEauthorblockA{\IEEEauthorrefmark{2}International Research Institute of Stavanger (IRIS), Oslo, Norway and DrillWell - Drilling and well centre for improved recovery\\
Stavanger, Norway}
\thanks{Corresponding author: J. Auriol (email: jean.auriol@mines-paristech.fr).}}
\IEEEtitleabstractindextext{
\begin{abstract}
We detail in this article the necessity of a change of paradigm for the delay-robust control of systems composed of two linear first order hyperbolic equations. One must go back to the classical trade-off between convergence rate and delay-robustness. More precisely, we prove that, for systems with strong reflections, canceling the reflection at the actuated boundary will yield zero delay-robustness. Indeed, for such systems, using a backstepping-controller, the corresponding target system should preserve a small amount of this reflection to ensure robustness to a small delay in the loop. This implies, in some cases, giving up finite time convergence.
\end{abstract}
\begin{IEEEkeywords}
Hyperbolic Partial Differential Equations; stabilization; backstepping; delay-robustness.
\end{IEEEkeywords}}
\maketitle
\IEEEdisplaynontitleabstractindextext
\IEEEpeerreviewmaketitle
\section{Introduction}
\IEEEPARstart{I}n this paper, we highlight an important shortcoming of some control designs for systems of two heterodirectional linear first-order hyperbolic Partial Differential Equations (PDEs). More precisely, we show that imposing finite-time convergence by completely canceling the \textbf{proximal reflection} (i.e \textbf{the reflection at the actuated boundary}) yields, in some cases, zero robustness margins to arbitrarily small delays in the actuation path. In particular, the control laws in recent contributions (see for instance~\cite{auriol2016minimum,coron2017finite,coron2013local,di2013stabilization,hu2016control}) can have very poor to no robustness to delays due to the cancellation of the proximal reflection. To overcome this problem, 
we propose some changes in the design of target system to preserver a small amount of this reflection and ensure delay-robustness.

Most physical systems involving a transport phenomenon can be modeled using hyperbolic partial differential equations (PDEs): heat exchangers~\cite{xu2002exponential}, open channel flow~\cite{coron1999lyapunov},~\cite{de2003boundary}, multiphase flow~\cite{di2011dynamics},~\cite{dudret2012stability} or power systems~\cite{thorp1998electromechanical}. The backstepping approach~\cite{coron2013local,hu2016control} has enabled the design of stabilizing full-state feedback laws for these systems. The generalization of these stabilization results for a large number of systems has been a focus point in the recent literature (details in~\cite{auriol2016minimum,coron2017finite,coron2013local,hu2016control}). The main objective of these controllers is to ensure convergence in the minimum achievable time (as defined in~\cite{li2010strong}), thereby neglecting the robustness aspects that are essential for practical applications. Some of these questions have been the purpose of recent investigations: in the presence of uncertainties in the system, the design of adaptive control laws using filter or swapping design is the purpose of~\cite{anfinsen2016adaptive,anfinsen2016boundary}. A different approach, towards an engineering use of backstepping, consists in deriving sufficient conditions guaranteeing the exponential stability of the controlled system in presence of uncertainties~\cite{Auriol2017}.  
The issues of noise sensitivity and performance trade-off are considered in \cite{auriol2017performance} where a method enabling the tuning of observer and controller feedback aggressiveness is proposed. 
However, the impact on stability of small delays in the feedback loop, has not been studied yet in this context. It has been observed (see~\cite{datko1986example,logemann1996conditions}) that for many feedback systems, the introduction of arbitrarily small time delays in the loop may cause instability for any feedback. In particular, in~\cite{logemann1996conditions}, a systematic frequency domain treatment of this phenomenon for distributed parameter systems is presented. Here, we use these results to cast a new light on feedback control design for linear hyperbolic systems.

The main contribution of this article is a set of necessary and sufficient conditions for classical controllers for linear hyperbolic systems to be robust to small delays. We prove that finite-time stabilization by completely canceling the proximal reflection, often yields vanishing delay margins, making it an impractical control objective. Indeed, the controllers derived in~\cite{auriol2016minimum,coron2013local,hu2016control} can be unstable in presence of a small delay in the loop due to the cancellation of the proximal reflection. Furthermore, some systems (for which the product of the proximal and distal reflection gains is greater than one)
cannot be delay-robustly stabilized, irrespective of the method.
Specifically, we show that, for a system of two heterodirectional linear hyperbolic equations with anti-diagonal source terms\footnote{For systems with source terms on the diagonal, a transform is first employed that changes the reflection coefficients.}, \textit{if the product of the proximal and distal reflections is}:
\begin{itemize}
\item \textbf{Greater than one}, the system cannot be stabilized robustly to delays.
\item \textbf{Smaller than one but greater than one-half}, the system cannot be finite time stabilized robustly to delays.
\item \textbf{Smaller than one-half} the system can be finite-time stabilized robustly to delays.
\end{itemize}

Our approach is the following: considering the control law proposed in~\cite{coron2013local} and using a backstepping approach, the controlled system is mapped to a distributed delay equation. Then, using the Laplace transform we derive the closed-loop transfer function~\cite{curtain2009transfer}. It is shown to be potentially unstable in presence of small delays. 
To ensure delay-robust stabilization, we propose some adjustments in the control law proposed in~\cite{coron2013local} by means of an additional degree of freedom enabling a trade-off between convergence rate and delay robustness. More precisely, if the plant has some proximal reflection terms, the target system should preserve a small amount of this reflection.

An important by-product of this analysis, detailed in Section~\ref{transfo_back} is the reformulation of \emph{any} system of two coupled linear hyperbolic equations as a zero-order neutral system with distributed delay. This result is obtained via a backstepping change of coordinates and yields a new tool for the study of hyperbolic systems.

The paper is organized as follows. In Section~\ref{sec:tutorial} we illustrate the necessity of this change of paradigm with a well-known system of two transport equations. These results are extended in Section~\ref{sec:general} to coupled systems composed of two hyperbolic PDEs. A new control method is then derived in Section~\ref{sec:new}. The corresponding feedback system is proved to be stable to small delays. Finally some simulation results are given in Section~\ref{sec:simulation}.

\section{A tutorial example: transport equations}\label{sec:tutorial}
In this Section, we consider the tutorial example of two pure transport equations coupled at the boundaries. We first recall historical results on the delay-robust stabilizability of such systems~\cite{hale2002strong,logemann1996conditions}. Then, we study the behaviour of various control laws in the presence of small delays in the actuation path.

\subsection{Description of the system}
We consider the following linear hyperbolic system of two transport equations
\begin{align}
u_t(t,x)+\lambda u_x(t,x)&=0, \label{u_pde_no_cou}\\
v_t(t,x)-\mu v_x(t,x)&=0, \label{v_pde_no_cou}
\end{align}
evolving in~$\{(t,x) | \quad t>0,~ x \in [0,1] \}$, with the following linear boundary conditions
\begin{align}
u(t,0)=qv(t,0), \quad v(t,1)=\rho u(t,1)+U(t). \label{IC_no_cou}
\end{align}
We will use the term \textbf{proximal reflection} to denote $\rho$: the reflection at the actuated boundary, and \textbf{distal reflection} to denote $q$: the reflection on the unactuated boundary.
The boundary reflections ~$q \ne 0$ and~$\rho$, and the velocities~$\lambda$ and~$\mu$ are assumed to be constant. The control law is denoted~$U(t)$. Moreover, we assume that
\begin{align}
-\mu<0 <\lambda. 
\end{align}
The initial conditions denoted~$u_0$ and~$v_0$ are assumed to belong to~$L^2([0,1])$. In the following we define the \textbf{characteristic time} of the system~$\tau$ as
\begin{align}
\tau=\frac{1}{\lambda}+\frac{1}{\mu}. \label{eq_tau}
\end{align}
The product~$\rho q$, product of the proximal and distal reflections, is called \textbf{open-loop gain} of the system. We recall the following definition from~\cite{logemann1996conditions}.
\begin{definition}\label{robust_stabilisation} Delay-robust stabilization \cite{logemann1996conditions}. \\The controller~$U(t)=\mathcal{K}[\begin{pmatrix}u\\v\end{pmatrix}]$ where~$\mathcal{K} : (L^2)^2 \rightarrow \mathbb{R}$ is an operator, delay-robustly stabilizes the system~\eqref{u_pde_no_cou}-\eqref{IC_no_cou} in the sense of \cite{logemann1996conditions} if the resulting feedback system stabilizes the system~\eqref{u_pde_no_cou}-\eqref{IC_no_cou} in the sense of the $L^2$-norm and is delay-robustly stable with respect to small delays in the loop. A system is said to be delay-robustly stabilizable if and only if there exists such a~$\mathcal{K}$. 
\end{definition}
\subsection{Open-loop transfer function}
In this section, we consider~$v(t,1)$ as the output of the system \eqref{u_pde_no_cou}-\eqref{IC_no_cou}. Using the method of characteristics, one can easily prove that~$v(t,1)$ satisfies the following delay equation.
\begin{align}
v(t,1)=\rho q v(t-\tau,1)+U(t), \label{delay_eq_no_couplings}
\end{align}
where~$\tau$ is defined by \eqref{eq_tau}. 
In the following, we denote~$s$ the Laplace variable, and use boldface to denote the Laplace transform of a given function, i.e., the Laplace transform of~$v(t,x)$ will be denoted~$\boldsymbol{v}(s,x)$. Taking the Laplace transform of \eqref{delay_eq_no_couplings}, we get
\begin{align}
\boldsymbol{v}(s,1) = \frac{1}{1-\rho qe^{-\tau s}}\boldsymbol{U}(s) =: \boldsymbol{H}_0(s)\boldsymbol{U}(s).
\end{align}
The transfer function~$\boldsymbol{H}_0(s)$ is the open-loop transfer function of the system. Depending on the value of the open-loop gain~$\rho q$ this transfer function has either no pole in the Right Half Plane (RHP) or an infinite number of poles in the RHP. More precisely, if~$|\rho q|<1$, this transfer function has no pole in the RHP whereas if~$|\rho q| \geq 1$, it has an infinite number of poles \textbf{ whose real parts are positive}. They are defined as
\begin{align}
\left\{
    \begin{array}{ll}
        s = \frac{1}{\tau}\ln(\rho q)+\frac{2k\pi}{\tau} i \quad &\text{if~$\rho q>0$}, \\
				s=\frac{1}{\tau}\ln(|\rho q|)+\frac{(2k+1)\pi}{\tau} i\quad &\text{if~$\rho q<0$},
\end{array}
\right.
\end{align}
where~$k$ is an arbitrary integer. Consequently using~\cite[Theorem 1.2]{logemann1996conditions} we have the following theorem
\begin{theorem}
If~$|\rho q|\geq 1$, system \eqref{u_pde_no_cou}-\eqref{IC_no_cou} is not delay-robustly stabilizable.
\end{theorem}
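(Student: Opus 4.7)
The strategy is to reduce the stabilization question to an equivalent question about a scalar difference equation in the boundary trace $v(t,1)$, identify the location of the poles of the corresponding open-loop transfer function, and then invoke Theorem 1.2 of \cite{logemann1996conditions}, which characterizes obstructions to delay-robust stabilization for distributed parameter systems with neutral-type pole distributions. The hypothesis $|\rho q|\geq 1$ is precisely what produces the obstructive pole structure.

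Concretely, I would first recall the derivation (already sketched before the statement) of the scalar delay equation $v(t,1)=\rho q\,v(t-\tau,1)+U(t)$ by the method of characteristics, and of the associated open-loop transfer function $\boldsymbol{H}_0(s)=(1-\rho q e^{-\tau s})^{-1}$. Its poles form an arithmetic progression on the vertical line $\mathrm{Re}(s)=\tau^{-1}\ln|\rho q|$, which lies in the open right half-plane when $|\rho q|>1$ and coincides with the imaginary axis when $|\rho q|=1$. In both sub-cases one obtains infinitely many closed-RHP poles with real parts bounded below by $0$, accumulating only at $\mathrm{Im}(s)=\pm\infty$. This is the hallmark pole pattern of a neutral delay system.

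Next I would apply \cite[Theorem 1.2]{logemann1996conditions}: for a SISO linear distributed parameter plant whose transfer function admits such a chain of poles in the closed RHP with real parts bounded below, no feedback operator can simultaneously stabilize the nominal loop and preserve stability under the insertion of an arbitrarily small delay in the actuation path. Since any candidate controller $U(t)=\mathcal{K}[(u,v)^{\!\top}]$ produces a closed-loop characteristic function of the form $1-\rho q e^{-\tau s}-\boldsymbol{\mathcal{K}}(s)$, the neutral-chain structure of $1-\rho q e^{-\tau s}$ cannot be altered by any finite- or bounded-order correction in the high-frequency regime, so Logemann's hypothesis is met and the conclusion transfers verbatim to our setting.

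The principal obstacle I anticipate is not the pole-location computation, which is immediate, but the bookkeeping that matches our plant/feedback class to the framework of \cite{logemann1996conditions}: one must check that the map from $U$ to $v(\cdot,1)$, read as an input-output operator on $L^{2}$, has a transfer function in the appropriate algebra, and that feedback through an arbitrary $\mathcal{K}\colon (L^{2})^{2}\to\mathbb{R}$ still falls within the hypotheses of the cited theorem. The case $|\rho q|=1$ deserves a separate remark because the chain of poles is then exactly on the imaginary axis, so the non-stabilizability of the nominal loop (rather than merely its lack of delay-robustness) is what is at stake; but the same high-frequency insensitivity argument yields the result.
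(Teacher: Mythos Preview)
Your proposal is correct and follows essentially the same approach as the paper: compute the poles of $\boldsymbol{H}_0(s)=(1-\rho q e^{-\tau s})^{-1}$, observe that they form an infinite chain in the closed right half-plane when $|\rho q|\geq 1$, and invoke \cite[Theorem~1.2]{logemann1996conditions}. The paper's own argument is in fact nothing more than that one-line citation following the pole computation, so your additional care about matching the feedback class to Logemann's hypotheses and about the borderline case $|\rho q|=1$ already exceeds what the paper provides.
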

This implies, in particular, that stability is equivalent to delay-robust stabilizability for system~\eqref{u_pde_no_cou}-\eqref{IC_no_cou}.
\subsection{Feedback control for an open-loop gain smaller than one}
\subsubsection{Finite-time stabilization}
In this section, we focus on a system of transport equations for which the open-loop gain satisfies~$|\rho q|<1$. Although in this case the system \eqref{u_pde_no_cou}-\eqref{IC_no_cou} is already exponentially stable in open-loop, one could want to increase the convergence rate or to have finite-time convergence. This improvement of the controller performance can be done using impedance matching methods (see~\cite{aarsnes2013limits,aarsnes2014modeling,egeland2002modeling}). This method is used, for instance, to improve the control performance for the heave rejection problem (\cite{aarsnes2014modeling}), one can match the load impedance (the pressure to flow ration in the frequency domain at the boundary) to the characteristic line impedance (the pressure to flow ratio in the frequency domain in the transmission line). The application of this method in the case of transport equations consists in canceling totally the reflexion term~$\rho u(t,1)$, and get a semi-infinite system that converges to zero in finite time. The corresponding control law is then defined by
\begin{align}
U(t)=-\rho u(t,1). \label{Control_law_transport}
\end{align}
Consider now that there is a small delay~$\delta>0$ in the actuation. The output~$v(t,1)$ is then solution of the following delay equation
\begin{align}
v(t,1)&=\rho q v(t-\tau,1)+ U(t-\delta), \nonumber\\
&=\rho q v(t-\tau,1)- \rho q v(t-\tau-\delta,1),  \label{delay_eq_no_couplings2}
\end{align}
  which is a zero-order scalar neutral system. Using classical results on such systems~\cite{hale2002strong}, we get that, a necessary condition to have equation \eqref{delay_eq_no_couplings2} stable for any delay~$\delta>0$ is 
\begin{align}
|\rho q|+|\rho q|<1 \Leftrightarrow |\rho q| <\frac{1}{2}. \label{eq_rho_q}
\end{align}
This means that for an open-loop gain such that~$|\rho q| \geq  \frac{1}{2}$, one cannot have both robustness to a delay and \textbf{finite time} convergence. This justifies the observations that have been done by industrial practitioners about the limitations of the impedance matching method. For instance, in~\cite{kyllingstad2009new}, the authors design a controller preventing stick-slip oscillations of a drill-string (a dysfunction of rotary drilling, characterized by large cyclic variations of the drive torque and the rotational bit speed). They observed that completely canceling the proximal reflection coefficient could change the dynamics of the string in a way that makes the system unstable. This observation is confirmed by the present analysis.

\subsubsection{First solution: preserving some reflection}\label{SubTransEq}
Although canceling the proximal reflection to stabilize system \eqref{u_pde_no_cou}-\eqref{IC_no_cou} increases the nominal convergence rate, the corresponding feedback system is not robustly stable to delay in the loop when~$|\rho q| \geq \frac{1}{2}$. Thus, it appears necessary to keep some reflection terms. To do so, let us slightly change the control law and replace \eqref{Control_law_transport} by
\begin{align}
U(t)=-Ku(t,1), \quad \quad  \label{U_transport}
\end{align}
In presence of a delay~$\delta>0$ in the actuation, we get, for the closed-loop system, the following delay equation:
\begin{align}
v(t,1)-\rho q v(t-\tau ,1)+K q v(t-\tau-\delta,1)=0, \label{delay_transport2}
\end{align}
which is now exponentially stable (\cite{hale2002strong}) for all~$\delta>0$ if~$K$ satisfies the following equation:
\begin{align}
 |K| < \frac{1-|\rho q|}{|q|}. 
\end{align}
 Note that such a $K$ exists since $ \frac{1-|\rho q|}{|q|}>0$  (as we assumed $|\rho q| <1$).
\subsubsection{Second solution: filtering the control law} \label{filter}
A second approach to provide a delay-robust stabilization consists in filtering the control law. Let us consider e.g the control law~$U_1(t)$ defined by its Laplace transformation as
\begin{align}
\boldsymbol{U}_1(s)=-\frac{1+as}{1+bs}\rho\boldsymbol{u}(1,s).
\end{align} 
where~$a$ and~$b>0$ are some coefficients that have still to be defined. In presence of a delay~$\delta>0$ in the actuation, we get, for the closed-loop system, the following delay equation:
\begin{align}
&\dot{v}(t,1)-\rho q \dot{v}(t-\tau ,1)+ \frac{a}{b}\rho q \dot{v}(t-\tau-\delta,1)=\nonumber\\
&-\frac{1}{b}(v(t,1)-\rho q v(t-\tau ,1)+ q\rho a v(t-\tau-\delta,1)), \label{delay_transport3}
\end{align}
which is exponentially stable (see \cite{fabiano2013stability}) for all~$\delta>0$ if
\begin{align}
\frac{a}{b}<\frac{1-|\rho q|}{|\rho q|},\quad a<\frac{1-|\rho q|}{|\rho q|}. \label{cond_a_b}
\end{align} 
\subsubsection{Concluding remarks}
Throughout the analysis of a simple system of two transport equations, we have proved (using the results from~\cite{logemann1996conditions}) that there is a whole class of systems (the ones for which the product~$|\rho q|$ is larger than one) that cannot be robustly stabilized in presence of a small delay in the loop. Moreover, it has appear that even for~$|\rho q|<1$, finite-time convergence is not a reasonable  objective since the corresponding controller is not always robust to delays.
This means that to have delay-robustness one may have to give up finite-time convergence.
In the next section, we show that this change of paradigm still holds for general coupled system of two hyperbolic partial differential equations.
\section{General case of two coupled equations}\label{sec:general}
In this section we consider the general case of two linear coupled hyperbolic PDEs. The main objective is to prove that the results of the previous section (requiring giving up finite time convergence to obtain delay-robust stabilization) still holds in the case where in-domain couplings exist. To do so, using a classical backstepping transformation (see~\cite{coron2013local}), the original system is mapped to a distributed-delay neutral system. Deriving the corresponding transfer function, it becomes possible to adjust the results of the previous section to this general case. Interestingly, this approach highlights the potential of backstepping as an analysis tool, rather than just a control design tool.
\subsection{Description of the system}
We consider the following linear hyperbolic system which appear in Saint-Venant equations, heat exchangers equations and other linear hyperbolic balance laws (see~\cite{bastin2016stability}).
\begin{align}
u_t(t,x)+\lambda u_x(t,x)&=\sigma^{+-}(x)v(t,x), \label{u_pde}\\
v_t(t,x)-\mu v_x(t,x)&=\sigma^{-+}(x)u(t,x), \label{v_pde}
\end{align}
evolving in~$\{(t,x) | \quad t>0,~ x \in [0,1] \}$, with the following linear boundary conditions 
\begin{align}
u(t,0)=qv(t,0), \quad v(t,1)=\rho u(t,1)+U(t), \label{IC}
\end{align}
The inside-domain coupling terms~$\sigma^{-+}$ and~$\sigma^{+-}$ can be spatially-varying, whereas the boundary coupling terms~$q \ne 0$ and~$\rho$, and the velocities~$\lambda$ and~$\mu$ are assumed to be constant. Moreover, we assume that
\begin{align}
-\mu<0 <\lambda. 
\end{align}
The initial conditions denoted~$u_0$ and~$v_0$ are assumed to belong to~$L^2([0,1])$. This system is pictured in Figure~\ref{Modele}.
\begin{figure}%
	\includegraphics[width=\columnwidth,trim= 0 4cm 0 4cm]{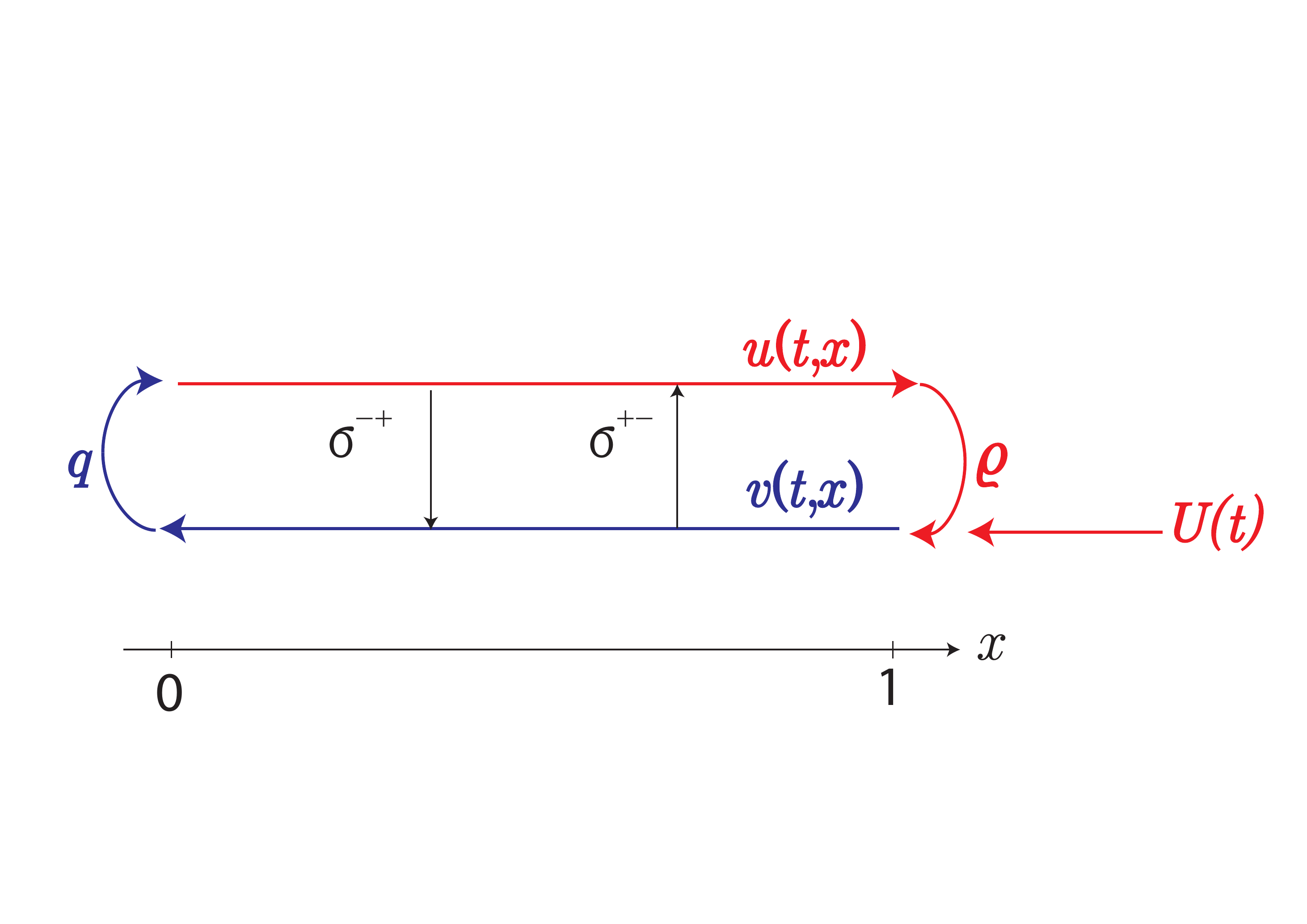}%
	\caption{Schematic representation of system \eqref{u_pde}-\eqref{IC}}%
	\label{Modele}%
\end{figure}
\subsection{A distributed-delay differential equation}\label{transfo_back}
In this section, by means of a classical backstepping transformation, the original system \eqref{u_pde}-\eqref{IC} is mapped to a neutral system with distributed-delay.
\subsubsection{Volterra transformation: removing inside-domain couplings} 
 We consider the following Volterra change of coordinates defined in~\cite{coron2013local} by
\begin{align}
&\alpha(t,x)=u(t,x) \nonumber\\
&\quad \quad -\int^x_0(K^{uu}(x,\xi)u(\xi)+K^{uv}(x,\xi)v(\xi))d\xi,\label{back1} \\
&\beta(t,x)=v(t,x) \nonumber \\
&\quad \quad -\int^x_0(K^{vu}(x,\xi)u(\xi)+K^{vv}(x,\xi)v(\xi))d\xi, \label{back2}
\end{align}
where the kernels~$K^{uu}, K^{uv}, K^{vu}, K^{vv}$ are defined on ~$\mathcal{T}=\{(x,\xi)\in [0,1]^2 |\quad \xi \leq x\}$ by the following set of hyperbolic PDEs:
\begin{align}
\lambda K^{uu}_x(x,\xi)+\lambda K^{uu}_{\xi}(x,\xi) &=-K^{uv}(x,\xi)\sigma^{-+}(\xi), \label{kerneK^{uu}}\\
\lambda K^{uv}_x(x,\xi)-\mu K^{uv}_{\xi}(x,\xi) &=-K^{uu}(x,\xi)\sigma^{+-}(\xi), \label{kerneK^{uv}}\\
\mu K^{vu}_x(x,\xi)-\lambda K^{vu}_{\xi}(x,\xi)&=K^{vv}(x,\xi)\sigma^{-+}(\xi), \label{kerneK^{vu}}\\
\mu K^{vv}_x(x,\xi)+\mu K^{vv}_{\xi}(x,\xi)&= K^{vu}(x,\xi)\sigma^{+-}(\xi), \label{kerneK^{vv}}
\end{align}
with the following set of boundary conditions:
\begin{align}
K^{vu}(x,x)=-\frac{\sigma^{-+}(x)}{\lambda+\mu},~ K^{vv}(x,0)=\frac{\lambda q}{\mu}K^{vu}(x,0),\\
K^{uv}(x,x)=\frac{\sigma^{+-}(x)}{\lambda+\mu},~ K^{uv}(x,0)=\frac{\lambda q}{\mu}K^{uu}(x,0). \label{K_bond}
\end{align}
\begin{lemma}[~\cite{coron2013local}]
Consider system \eqref{kerneK^{uu}}-\eqref{K_bond}. There exists a unique solution~$K^{uu}$,~$K^{uv}$,~$K^{vu}$ and~$K^{vv}$ in~$L^{\infty}(\mathcal{T})$. Moreover, transformation~\eqref{back1}-\eqref{back2} is invertible and this inverse transformation can be expressed as follow
\begin{align}
&u(t,x)=\alpha(t,x) \nonumber\\
&\quad \quad +\int^x_0(L^{\alpha\alpha}(x,\xi)\alpha(\xi)+L^{\alpha\beta}(x,\xi)\beta(\xi))d\xi\label{back_inv1} \\
&v(t,x)=\beta(t,x) \nonumber \\
&\quad \quad +\int^x_0(L^{\beta\alpha}(x,\xi)\alpha(\xi)+L^{\beta\beta}(x,\xi)\beta(\xi))d\xi \label{back_inv2}
\end{align}
where the kernels $L^{\alpha\alpha}$,~$L^{\alpha\beta}$,~$L^{\beta\alpha}$ and~$L^{\beta\beta}$ belongs to~$L^{\infty}(\mathcal{T})$ 
\end{lemma}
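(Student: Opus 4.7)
The plan is to follow the approach of \cite{coron2013local} and reduce everything to a Volterra fixed-point problem. First, I would convert the kernel PDE system \eqref{kerneK^{uu}}--\eqref{K_bond} into an equivalent system of coupled integral equations by integrating each of the four first-order hyperbolic kernel PDEs along its characteristic lines in the triangular domain $\mathcal{T}$. For $K^{uu}$ and $K^{vv}$ the transport is along the direction $\xi = x + \text{const}$, and one integrates inward starting from the diagonal $\{\xi = x\}$, using the explicit values $K^{uv}(x,x)$ and $K^{vu}(x,x)$ from \eqref{K_bond} once the corresponding integrations are coupled in. For $K^{uv}$ and $K^{vu}$ the characteristics have the opposite sign and reach the boundary $\{\xi = 0\}$, where one must use the reflection relations $K^{uv}(x,0) = \frac{\lambda q}{\mu} K^{uu}(x,0)$ and $K^{vv}(x,0) = \frac{\lambda q}{\mu}K^{vu}(x,0)$ to close the system.

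Once this is done, the problem becomes a vector equation $\mathbf{K} = \mathbf{F} + \mathcal{I}[\mathbf{K}]$ where $\mathbf{K} = (K^{uu},K^{uv},K^{vu},K^{vv})$, $\mathbf{F} \in L^\infty(\mathcal{T})$ is explicit, and $\mathcal{I}$ is a Volterra-type integral operator integrating over subregions of $\mathcal{T}$ that shrink as one approaches the diagonal. A successive approximation argument, controlling the $n$-th iterate by a term of order $C^n/n!$ thanks to the triangular shrinking of the domains of integration, then produces a unique fixed point in $L^\infty(\mathcal{T})$.

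For the invertibility claim, I would view \eqref{back1}--\eqref{back2} as $(\alpha,\beta)^{\top} = (I - \mathcal{K})(u,v)^{\top}$, where $\mathcal{K}$ is a Volterra integral operator of the second kind on $L^2([0,1])$ whose matrix-valued kernel lies in $L^\infty$ and is supported in $\mathcal{T}$. Standard theory of Volterra operators of the second kind (Neumann series convergent in operator norm on $L^2$, with $L^\infty$ kernel estimates preserved by iteration) yields an inverse of the form $I + \mathcal{L}$ whose kernels $L^{\alpha\alpha},L^{\alpha\beta},L^{\beta\alpha},L^{\beta\beta}$ are again in $L^\infty(\mathcal{T})$. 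Alternatively, substituting \eqref{back_inv1}--\eqref{back_inv2} into \eqref{back1}--\eqref{back2} and matching terms produces a PDE system for the $L$-kernels structurally identical to \eqref{kerneK^{uu}}--\eqref{K_bond}, which can be solved by the same fixed-point argument.

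The main obstacle I expect is the careful propagation of the characteristics of $K^{uv}$ and $K^{vu}$ through the boundary $\{\xi = 0\}$ and the resulting coupling with $K^{uu}$ and $K^{vv}$: the reflection relations introduce cross-terms that mix all four equations in the integral reformulation, and one must check that the resulting Volterra operator is still triangular enough for the small-iterate estimate to kick in. Once this combinatorial bookkeeping is handled, the existence, uniqueness and $L^\infty$ regularity follow from a direct contraction argument, and the invertibility is a soft consequence of the Volterra structure.
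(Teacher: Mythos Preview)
The paper does not actually prove this lemma; it is stated with attribution to~\cite{coron2013local} and used without further justification. There is therefore no ``paper's own proof'' to compare against. Your proposed argument---integrating the kernel PDEs along characteristics, closing via the boundary relations at $\{\xi=0\}$, and running a successive-approximation/Volterra fixed-point scheme---is precisely the standard route taken in~\cite{coron2013local} (and in the broader backstepping literature), and the invertibility via the Neumann series of a second-kind Volterra operator with $L^\infty$ kernel is likewise the usual mechanism. Your outline is correct and matches what one finds in the cited reference.
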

The dynamics of the system in the new coordinates is:
\begin{align}
\alpha_t(t,x)+\lambda \alpha_x(t,x)&=0, \label{alpha_pde}\\
\beta_t(t,x)-\mu \beta_x(t,x)&=0, \label{beta_pde}
\end{align}
with the following linear boundary conditions 
\begin{align}
\alpha(t,0)&=q\beta(t,0), \nonumber \\
 \beta(t,1)&=\rho \alpha(t,1)+U_0(t)\nonumber\\
&-\int_0^1N^{\alpha}(\xi)\alpha(t,\xi)+N^{\beta}(\xi)\beta(t,\xi)d\xi, \label{IC_alpha}
\end{align}
with
\begin{align}
&N^{\alpha}(\xi)=L^{\beta\alpha}(1,\xi)-\rho L^{\alpha\alpha}(1,\xi), \label{L_alpha} \\
&N^{\beta}(\xi)=L^{\beta\beta}(1,\xi)-\rho L^{\alpha\beta}(1,\xi). \label{L_beta}
\end{align}

\subsubsection{Neutral equation with distributed delay}
Using the method of characteristics on equations \eqref{alpha_pde}-\eqref{beta_pde} yields (for all~$x \in [0,1]$, for all~$t>0$)
\begin{align}
\alpha(t,x)&=q\beta(t-\frac{x}{\lambda}-\frac{1}{\mu},1), \label{eq_alpha_1}\textbf{}\\
\beta(t,x)&=\beta(t-\frac{1-x}{\mu},1). \label{eq_beta_1}
\end{align}
Consequently, combining equations~\eqref{eq_alpha_1}-\eqref{eq_beta_1} and equation~\eqref{IC_alpha}, we get:
\begin{align}
\beta(t,1)&=q\rho \beta(t-\tau,1)+U_0(t)\nonumber\\
&-\int_0^1N^{\alpha}(\xi) \beta(t-\frac{\xi}{\lambda}-\frac{1}{\mu},1)d\xi \nonumber \\
&-\int_0^1 N^{\beta}(\xi) \beta(t-\frac{1-\xi}{\mu},1) d\xi \nonumber 
\end{align}
Thus,
\begin{align}
\beta(t,1)&=q\rho \beta(t-\tau,1)+U_0(t) \nonumber\\
&-\int_0^\tau \tilde{N}(\nu) \beta(t-\nu,1) d\nu,
\label{delay_eq_alpha}
\end{align}
where~$\tau$ is defined by \eqref{eq_tau} and where~$\tilde{N}$ is defined by
\begin{align}
\tilde{N}(\nu)=\lambda N^\alpha(\lambda\nu-\frac{\lambda}{\mu})h_{[\frac{1}{\mu},\tau]}(\nu)+\mu N^\beta(1-\mu \nu)h_{[0,\frac{1}{\mu}]}(\nu) \label{eq_N_tilde}
\end{align}
where for any interval I,~$h_I(x)$ is defined by
\begin{align}
h_I(x)=\left\{
    \begin{array}{ll}
        1 & \quad \mbox{if,} \quad x \in I \\
        0 & \quad \mbox{else.}
            \end{array}
\right.
\end{align}
This invertible coordinate change enable us to rewrite $\beta$ as the solution of a delay equation with distributed delays. Since this transformation is independent of the control law, it means that the class of systems described by \eqref{u_pde}-\eqref{IC}, is equivalent to a class of neutral systems with distributed delay, as given by \eqref{delay_eq_alpha}.

\begin{remark}\em
This result is crucial in so far as it offers a new outlook to analyze the properties of hyperbolic systems. Existing stability results for neutral equations (see \cite{damak2015stability,hale2002strong}) can for instance be adjusted for hyperbolic PDEs, due to equation~\ref{delay_eq_alpha}. 
\end{remark}
\begin{remark}\em
 The equivalence between systems described by a single first-order hyperbolic PDE and systems described by integral delay equations was already proved in \cite{karafyllis2014relarion}. Equation~\eqref{delay_eq_alpha} extends this result for system composed of two coupled hyperbolic PDEs.
\end{remark}


\subsection{Open-loop analysis}
In this section, we consider~$\beta(t,1)$ as the output of the system \eqref{alpha_pde}-\eqref{beta_pde}. Taking the Laplace transform of \eqref{delay_eq_alpha}, we get
\begin{align}
\boldsymbol{\beta}(s,1)&=\frac{1}{1-\rho q e^{-\tau s}+\int_0^\tau \tilde{N}(\xi)e^{-\xi s}d\xi}\boldsymbol{U}_0(s) \nonumber\\
 & :=\boldsymbol{H}_1(s)\boldsymbol{U}_0(s). 
\end{align}

We then have the following lemma
\begin{lemma}\label{Lem_inf_poles}
If~$|\rho q|> 1$, then the open-loop transfer function~$\boldsymbol{H}_1(s)$ has an infinite number of poles with a positive real part.
\end{lemma}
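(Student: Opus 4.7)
The plan is to locate the poles of $\boldsymbol{H}_1$ by studying the zeros of its characteristic function
\begin{align*}
F(s) := 1 - \rho q\, e^{-\tau s} + \int_0^\tau \tilde{N}(\xi)\, e^{-\xi s}\, d\xi,
\end{align*}
and to compare $F$ with its ``principal part'' $f(s) := 1 - \rho q\, e^{-\tau s}$ via Rouché's theorem on a well chosen family of small circles. The zeros of $f$ are already enumerated in the paper: they lie on the vertical line $\textrm{Re}(s) = \sigma_\star := \frac{1}{\tau}\ln|\rho q|$, regularly spaced with gap $\frac{2\pi}{\tau}$, and since $|\rho q|>1$ we have $\sigma_\star > 0$. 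So the skeleton is clear: if we can show that the integral perturbation becomes negligible compared to $f$ along that vertical line at high frequencies, each $f$-zero perturbs into a zero of $F$ of strictly positive real part, producing infinitely many RHP poles of $\boldsymbol{H}_1$.

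First I would quantify the behavior of $f$ near one of its zeros $s_k$. Since $f(s_k)=0$ and $f'(s_k) = \tau \rho q\, e^{-\tau s_k} = \tau$, a Taylor expansion gives $f(s) = \tau(s-s_k) + R_k(s)$ with a remainder whose derivatives are uniformly bounded in $k$ (because $|e^{-\tau s_k}| = 1/|\rho q|$ is independent of $k$). Consequently there exist $r_0 > 0$ and $c > 0$, independent of $k$, such that
\begin{align*}
|f(s)| \;\geq\; c\, r_0 \qquad \text{for all } s \text{ with } |s-s_k|=r_0.
\end{align*}

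Next I would bound the integral perturbation. Write $s = \sigma + i\omega$ with $\sigma$ confined to a small neighborhood of $\sigma_\star$. Because $\tilde{N}\in L^1([0,\tau])$ (as a bounded combination of the $L^\infty$ backstepping kernels $L^{\alpha\alpha}, L^{\alpha\beta}, L^{\beta\alpha}, L^{\beta\beta}$ through $N^\alpha, N^\beta$), the function $\xi \mapsto \tilde{N}(\xi) e^{-\sigma\xi}$ is integrable, so by the Riemann--Lebesgue lemma
\begin{align*}
\left|\int_0^\tau \tilde{N}(\xi) e^{-\xi s}\, d\xi\right| \;\xrightarrow[|\omega|\to\infty]{}\; 0,
\end{align*}
uniformly for $\sigma$ in any compact set. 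Hence there exists $K_0$ such that for every $|k| \geq K_0$ and every $s$ on the circle $|s-s_k|=r_0$, the perturbation is strictly smaller than $c r_0$, i.e.\ smaller than $|f(s)|$.

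Rouché's theorem then yields, for each such $k$, exactly one zero of $F$ inside the disk $|s-s_k|< r_0$. Since there are infinitely many such $s_k$ and each lies at real part $\sigma_\star > 0$, choosing $r_0 < \sigma_\star$ guarantees that all of the resulting zeros of $F$ have strictly positive real part; these are precisely poles of $\boldsymbol{H}_1(s)$, which proves the lemma. The only step requiring genuine care is the uniform-in-$k$ Rouché estimate, and the key ingredient that makes it work is that the ``principal part'' $f(s)$ and the perturbation behave differently at high frequencies: the former is quasi-periodic in $\textrm{Im}(s)$, while the latter decays by Riemann--Lebesgue.
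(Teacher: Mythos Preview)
Your proof is correct and follows the same overall strategy as the paper: split the characteristic function into the principal part $f(s)=1-\rho q\,e^{-\tau s}$, whose zeros are explicit and sit on the line $\textrm{Re}(s)=\sigma_\star>0$, and the integral perturbation, then use a Rouch\'e argument to show the zeros persist in the open right half-plane.

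The route you take to the smallness estimate, however, is genuinely different. The paper exploits that $\tilde N$ is piecewise differentiable, integrates by parts, and obtains a perturbation that decays like $1/|s|$ in every direction of the complex plane; it then invokes the general Rouch\'e-type Theorem~\ref{Rouche_general} from the Appendix (built on Levinson's gap theorem and a Montel/Hurwitz normal-families argument) to transfer infinitely many zeros from $f$ to the full function. You instead rely only on $\tilde N\in L^1$ and the Riemann--Lebesgue lemma, obtaining decay of the perturbation as $|\textrm{Im}(s)|\to\infty$ uniformly for $\textrm{Re}(s)$ in a compact neighborhood of $\sigma_\star$; combined with the observation that $f(s)=1-e^{-\tau(s-s_k)}$ has \emph{exactly} the same local shape near each zero $s_k$ (so the lower bound on circles $|s-s_k|=r_0$ is trivially uniform in $k$), a direct application of the classical Rouch\'e theorem on those circles finishes the job. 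Your argument is more elementary and self-contained---it avoids the Appendix machinery entirely and needs no regularity of $\tilde N$ beyond integrability---while the paper's integration-by-parts yields a stronger, direction-independent $O(1/|s|)$ bound that feeds naturally into the general perturbation theorem they develop for later use.
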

\begin{proof}\em
We assume here that the open loop gain~$\rho q$ is positive (the case~$\rho q<0$ can be treated in a similar way).
The poles of the open-loop transfer function are the solutions of 
\begin{align}
1-\rho q e^{-\tau s}+\int_0^\tau \tilde{N}(\xi)e^{-\xi s}d\xi=0,
\end{align}
$\tilde{N}$ is differentiable almost everywhere on~$[0,1]$. Integrating by parts yields
\begin{align}
0&=1-\rho qe^{-s\tau}+\frac{1}{s}\int_0^\tau \tilde{N}'(\xi)e^{-\xi s}d\xi -\frac{1}{s}J(s),
\end{align}
where~$J$ is a bounded function that represents the jumps appearing in the integration by parts.
We denote in the following
\begin{align}
F(s)&=1-\rho qe^{-s\tau},\\
H(s)&=\frac{1}{s}\int_0^\tau \tilde{N}'(\xi)e^{-\xi s}d\xi -\frac{1}{s}J(s).
\end{align}
Since~$\tilde{N}'$ is bounded, the function~$|H|$ converges to 0 for~$|s|\rightarrow 0$ . The function~$F$ has an infinite number of zeros whose real parts are equal to~$\frac{\ln (\rho q)}{2\tau}$. The hypothesis of Theorem~\ref{Rouche_general} (see the Appendix) are satisfied and we can then conclude that~$F+H$ has an infinite number of zeros whose real parts are strictly positive. This concludes the proof
\end{proof}
We can now state the following Theorem
\begin{theorem}\label{TH2}
If~$|\rho q|> 1$, system \eqref{u_pde}-\eqref{IC} cannot be stabilized robustly to delays.
\end{theorem}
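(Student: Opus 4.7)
The plan is to reduce the statement to a direct application of Lemma \ref{Lem_inf_poles} combined with the delay-robustness obstruction from \cite[Theorem 1.2]{logemann1996conditions}, exactly as was done in the tutorial section for the pure transport case. The main new ingredient compared with Section \ref{sec:tutorial} is that we must legitimately transfer the open-loop spectral information about $\boldsymbol{H}_1(s)$ (computed in the $(\alpha,\beta)$ coordinates) back to a statement about the original plant in $(u,v)$ coordinates.

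First, I would argue that the Volterra transformation \eqref{back1}-\eqref{back2} and its inverse \eqref{back_inv1}-\eqref{back_inv2} are bounded and boundedly invertible on $L^2([0,1])$, since their kernels lie in $L^\infty(\mathcal{T})$. Consequently, any feedback law $\mathcal{K}$ acting on $(u,v)$ in the sense of Definition \ref{robust_stabilisation} is equivalent, through the change of variables, to a feedback law $\mathcal{K}_0$ acting on $(\alpha,\beta)$, and the closed-loop $L^2$-stability (with or without a small delay $\delta$ in the actuation path) of the two formulations is equivalent. In particular, $\mathcal{K}$ delay-robustly stabilizes \eqref{u_pde}-\eqref{IC} if and only if $\mathcal{K}_0$ delay-robustly stabilizes the transformed system \eqref{alpha_pde}-\eqref{IC_alpha}, which is itself equivalent to the neutral distributed-delay equation \eqref{delay_eq_alpha}.

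Next, I would invoke Lemma \ref{Lem_inf_poles}: under $|\rho q|>1$, the open-loop transfer function $\boldsymbol{H}_1(s)$ has infinitely many poles in the right-half plane, and by inspection of the proof these poles accumulate along the vertical line $\mathrm{Re}(s) = \tfrac{\ln|\rho q|}{\tau} > 0$, hence their real parts are bounded below by a strictly positive constant. This is precisely the spectral configuration that triggers \cite[Theorem 1.2]{logemann1996conditions}: a SISO distributed-parameter plant whose transfer function admits an infinite sequence of poles in the right-half plane with real parts bounded away from zero cannot be stabilized by any LTI controller in a manner that is robust to arbitrarily small delays in the loop. Applying this to $\boldsymbol{H}_1$ shows that no $\mathcal{K}_0$ delay-robustly stabilizes \eqref{delay_eq_alpha}, and, by the equivalence of the previous paragraph, no $\mathcal{K}$ delay-robustly stabilizes \eqref{u_pde}-\eqref{IC}.

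The step I expect to require the most care is the first one, namely justifying that the backstepping change of variables preserves both the $L^2$ notion of closed-loop stability and the robustness-to-small-delay property. Once this equivalence is in place, the remainder is essentially a citation of Lemma \ref{Lem_inf_poles} together with the frequency-domain obstruction theorem of \cite{logemann1996conditions}. A minor subtlety is handling the sign of $\rho q$: the case $\rho q < 0$ was treated implicitly in Lemma \ref{Lem_inf_poles} (where it was stated that the argument is symmetric), and the pole locations given in that lemma then lie on $\mathrm{Re}(s) = \tfrac{\ln|\rho q|}{\tau}$ in either case, so the same conclusion follows verbatim.
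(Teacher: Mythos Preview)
Your proposal is correct and follows essentially the same route as the paper: reduce to the neutral distributed-delay equation \eqref{delay_eq_alpha} via the invertible Volterra transformation, invoke Lemma~\ref{Lem_inf_poles} to obtain infinitely many open-loop poles in the right half-plane, and conclude via \cite[Theorem~1.2]{logemann1996conditions}. If anything, you are more careful than the paper, which dispatches the equivalence step in a single parenthetical (``since both systems are equivalent'') without spelling out that bounded invertibility of the backstepping transformation is what transfers both $L^2$-stability and delay-robustness between the two formulations.
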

\begin{proof}\em
If there exists a controller~$U_0$ for system \eqref{u_pde}-\eqref{IC} such that the resulting feedback system is robustly stable to small delays in the loop, it implies that equation \eqref{delay_eq_alpha} is stable (since both system are equivalent). It means that there exists a controller for system \eqref{delay_eq_alpha} such that the resulting feedback system is robustly stable to small delays in the loop. This is impossible (see \cite[Theorem 1.2]{logemann1996conditions}) since the open-loop transfer function has an infinite number of poles with a positive real part.
\end{proof}
We have proved in this section that, similarly to transport equations, if the open-loop gain~$|\rho q|$ is greater than one, one cannot find a controller whose delay margin is non-null. Consequently, there is a whole class of hyperbolic systems that cannot be delay-robustly stabilized.

\begin{remark}\em
The critical case~$\rho q=1$ is not considered here. Indeed one cannot simply adjust the previous proof, since the zeros of~$F$ are located on the imaginary axis.
\end{remark}
\subsection{Feedback control for an open loop gain smaller than one}

\subsubsection{Finite-time stabilization}

In this section, we focus on a system of hyperbolic equations for which the open-loop gain satisfies~$|\rho q|<1$. Note that  the uncontrolled system can be unstable due to the inside-domain couplings~$\sigma^{-+}$ and~$\sigma^{+-}$ (see~\cite{bastin2016stability}). In~\cite{coron2013local}, using the backstepping approach, a control law that ensures finite-time stabilization of the original system was derived. This control law is defined by
\begin{align}
	&U_{BS}(t)=-\rho u(t,1)\nonumber \\
	&+\int_0^1K^{vu}(1,\xi)u(\xi,t)+K^{vv}(1,\xi)v(\xi,t)d\xi \nonumber \\
	&=-\rho \alpha (t,1) \nonumber \\
	&+\int_0^1N^{\alpha}(1,\xi)\alpha(\xi,t)+N^{\beta}(1,\xi)\beta(\xi,t)d\xi, \label{control}
	\end{align}
where the kernels~$K^{vu}$ and~$K^{vv}$ are defined by equations \eqref{kerneK^{uu}}-\eqref{K_bond} and~$N^{\alpha}, N^\beta$ are defined by equations \eqref{L_alpha}-\eqref{L_beta}.

Consider now that there is a small delay~$\delta>0$ in the actuation. The output~$\beta(t,1)$ is then solution of the following delay equation
\begin{align}
&\beta(t,1)=q\rho \beta(t-\tau,1)-\rho q \beta(t-\tau-\delta,1)\nonumber\\
&-\int_0^\tau \tilde{N}(\xi) (\beta(t-\xi,1) - \beta(t-\delta-\xi,1)) d\xi ,
\label{delay_eq_alpha_bis}
\end{align}
where~$\tilde{N}$ is defined by \eqref{eq_N_tilde}.
We denote~$\boldsymbol{H}_1(s)$ as
\begin{align}
\boldsymbol{H}_1(s)=\int_0^1 \tilde{N}(\xi)(e^{-\xi s}-e^{-(\xi+\delta)s})d\xi.
\end{align}
Taking the Laplace transform of equation \eqref{delay_eq_alpha_bis}, we get
\begin{align}
(1-\rho q e^{-\tau s} +\rho q e^{-(\tau+\delta) s})\boldsymbol{\beta}(s,1)=\boldsymbol{H}_1(s)\boldsymbol{\beta}(s,1).
\end{align}
We can now state the following Theorem:
\begin{theorem} \label{TH3} If~$|\rho q| > \frac{1}{2}$, then the system \eqref{u_pde}-\eqref{IC} with the delayed backstepping control law~$U_{BS}(t-\delta)$ is unstable for any~$\delta>0$.
\end{theorem}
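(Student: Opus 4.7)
The strategy mirrors the proof of Lemma~\ref{Lem_inf_poles}. Taking the Laplace transform of the closed-loop equation~\eqref{delay_eq_alpha_bis}, the spectrum of the system lies in the zero set of
$$D(s) := F(s) + H(s),$$
where $F(s) := 1 - \rho q\, e^{-\tau s} + \rho q\, e^{-(\tau + \delta)s}$ is the characteristic function of the neutral part and $H(s) := \int_0^\tau \tilde N(\xi)(e^{-\xi s} - e^{-(\xi + \delta) s})\, d\xi$ collects the distributed-delay contributions coming from the backstepping kernels. The plan is to exhibit a sequence of zeros of $D$ with strictly positive real part, which would preclude stability.

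First, I would integrate by parts in $H(s)$, exploiting that $\tilde N$ defined in~\eqref{eq_N_tilde} is piecewise $C^1$ by regularity of the kernels $L^{\alpha\alpha},\dots,L^{\beta\beta}$. This yields $|H(s)| \le C/|s|$ uniformly on every closed right half-plane, exactly as in the corresponding bound used in Lemma~\ref{Lem_inf_poles}.

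Second, I would locate the zeros of the principal part $F$. Writing $F(s) = 1 - g(s)$ with $g(s) := \rho q\, e^{-\tau s}(1 - e^{-\delta s})$, one has on the imaginary axis $|g(i\omega)| = 2|\rho q|\,|\sin(\delta\omega/2)|$, whose maximum value $2|\rho q|$ is attained at $\omega_k := (2k+1)\pi/\delta$. Since $|\rho q|>1/2$ means $2|\rho q|>1$, the locus $\omega\mapsto g(i\omega)$ leaves the unit disk at those frequencies. The argument of $g(i\omega_k)$ is $\pi/2 + \arg(\rho q) - (\tau + \delta/2)\omega_k \pmod{2\pi}$, and by Kronecker's density theorem (for $\delta$ with $\tau/\delta$ irrational) these arguments fill $[0,2\pi)$ densely along $(\omega_k)$. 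Hence infinitely many $g(i\omega_k)$ sit in a neighborhood of the positive real axis with modulus strictly larger than~$1$, and a continuity argument on the horizontal rays $\sigma + i\omega_k$, $\sigma\ge 0$ (along which $|g|$ decreases continuously to $0$ as $\sigma\to+\infty$) produces, for each such $\omega_k$, a root of $g(s)=1$ with real part bounded below by some $\gamma>0$ independent of $k$. This exhibits infinitely many zeros of $F$ in the strict right half-plane.

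Finally, a Rouché-type argument, following the theorem of the Appendix already invoked in Lemma~\ref{Lem_inf_poles}, closes the proof: on small contours around the zeros of $F$ just constructed, the estimate $|H(s)|<|F(s)|$ holds once $|s|$ is large enough, so $D=F+H$ inherits infinitely many zeros in the strict right half-plane. Instability of the closed loop for that $\delta$ then follows from~\cite[Theorem~1.2]{logemann1996conditions}, as in Theorem~\ref{TH2}, and since such $\delta$ can be chosen arbitrarily small, delay-robust stability is lost. I expect the main obstacle to lie in the second step: making the Nyquist/Kronecker argument quantitative enough to obtain the uniform positive lower bound $\gamma$ on the real parts of the zeros of $F$, and handling separately the commensurate case $\tau/\delta\in\mathbb{Q}$ by reducing $F$ to a polynomial in $e^{-Ts}$ for a suitable $T$ and exploiting the density of the non-commensurate $\delta$.
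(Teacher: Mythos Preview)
Your overall architecture is exactly the paper's: decompose the closed-loop characteristic function as $D=F+H$, show $|H(s)|\to 0$ by integration by parts, locate infinitely many zeros of the neutral principal part $F$ in a half-plane $\Re s>\epsilon$, and finish with the Rouch\'e-type result of the Appendix (Theorem~\ref{Rouche_general}). Where you diverge is only in the middle step, and there the paper takes a much shorter road.

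Instead of your Nyquist/Kronecker analysis of $g(i\omega)$, the paper simply performs a real shift: it sets
\[
F_2(s)\;=\;F(s-\epsilon)\;=\;1-\rho q\,e^{\tau\epsilon}e^{-\tau s}+\rho q\,e^{(\tau+\delta)\epsilon}e^{-(\tau+\delta)s},
\]
observes that for $\epsilon>0$ small enough one still has $|\rho q|e^{\tau\epsilon}+|\rho q|e^{(\tau+\delta)\epsilon}>1$ (since $2|\rho q|>1$), and then invokes the Hale--Verduyn~Lunel instability criterion for difference operators~\cite{hale2002strong} to conclude that $F_2$ has infinitely many unbounded zeros with positive real part. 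Translating back, $F$ has infinitely many zeros with real part larger than $\epsilon$, which is precisely the uniform lower bound $\gamma$ you were trying to manufacture by hand. This bypasses entirely the equidistribution argument, the explicit computation of $\arg g(i\omega_k)$, and the continuity-along-rays construction.

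Two further comments. First, your worry about the commensurate case $\tau/\delta\in\mathbb{Q}$ is legitimate for \emph{your} route, but the paper does not separate this case: it leans on the cited neutral-equation result wholesale. Second, your closing sentence slightly misreads the statement: the theorem asserts instability for \emph{every} fixed $\delta>0$, not merely the existence of arbitrarily small destabilizing delays, so the concluding appeal to~\cite{logemann1996conditions} is unnecessary---the instability follows directly once $F+H$ is shown to have a right-half-plane zero for the given $\delta$.
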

\begin{proof}\em
This proof uses the same idea as the one used for the proof of Lemma~\ref{Lem_inf_poles}. Let us denote 
\begin{align}
F_1(s)=1-\rho q e^{-\tau s}+\rho q e^{-(\tau +\delta)s},
\end{align}
and 
\begin{align}
F_2(s)=1-\rho q e^{\tau \epsilon} e^{-\tau s}+\rho q e^{(\tau +\delta)\epsilon} e^{-(\tau +\delta)s},
\end{align}
where~$\epsilon>0$. Choosing~$\epsilon$ small enough, we have that~$|\rho q e^{\tau \epsilon}|+|\rho q e^{(\tau +\delta)\epsilon}|>1$. Consequently,~$F_2(s)$ has an infinite number of roots whose real parts are positive (see~\cite{hale2002strong}). Moreover, these roots are unbounded. Thus,~$F_1(s)$ has an infinite number of roots whose real parts are larger than~$\epsilon$. Integrating by part~$H_1(s)$ we prove that~$|H_1(s)|$ converges to zero for~$|s|$ large enough. Using Theorem~\ref{Rouche_general}, we have that~$F_1+H_1$ has at least one root whose real part is strictly positive. This concludes the proof.
\end{proof}
The fact that the backstepping controller proposed in~\cite{coron2013local} has \textit{zero} delay margin when~$|\rho q| > \frac{1}{2}$ means that it cannot be used for practical applications.
Specifically, $|\rho q| > \frac{1}{2}$ indicates that the feedback systems cannot have both finite time convergence and be robust to delays. Similarly to the case of transport equations, this stability limitation is not due to the backstepping method itself bu is strongly interwoven with the cancellation of the proximal reflection term~$\rho u(t,1)$. To obtain a tractable implementation of a controller for the system \eqref{u_pde}-\eqref{IC}, one must have robustness to delays and thereby give up finite-time convergence.

\begin{remark}\em
For systems that do not have a reflection at either boundary, there is no concern with delay-robustness. This is consistent with the delay-robustness results for predictor feedback developed in \cite{bresch2010delay,krstic2008lyapunov}.
\end{remark}
In the next section we propose a different control design by slightly adjusting the control law \eqref{control}.

\section{A new control paradigm}\label{sec:new}

In this section we slightly modify the control law \eqref{control} to overcome the stability limitation exposed above, while maintaining the same structure for the controller. The control law \eqref{control} is composed of two parts:
\begin{enumerate}
	\item the integral part whose objective is to remove the effect of inside-domain couplings\\
	\item the term~$-\rho u(t,1)$ whose objective is to cancel the proximal reflection and to ensure finite-time convergence.
\end{enumerate}
As seen above, the instability of the feedback system in presence of small delay in the loop is mostly due to the term~$-\rho u(t,1)$ in the control law. It appears consequently necessary to avoid the total cancellation of the proximal  reflection (and thereby giving up finite time convergence). Based on the analysis carried out in Section~\ref{SubTransEq} for the case of transport equations, we proposed a similar adjustment for the control law \eqref{control} when~$|\rho q | <1$.


\subsection{Control law}
Let us consider the following control law:
\begin{align}
	&U_{BS_2}(t)=-K u(t,1)\nonumber \\
	&-(\rho-K)\int_0^1K^{uu}(1,\xi)u(\xi,t)+K^{uv}(1,\xi)v(\xi,t)d\xi \nonumber \\
	&+\int_0^1K^{vu}(1,\xi)u(\xi,t)+K^{vv}(1,\xi)v(\xi,t)d\xi, \label{control3}
\end{align}
where~$K^{uu}$,~$K^{uv}$,~$K^{vu}$ and~$K^{vv}$ are defined by equations \eqref{kerneK^{uu}}-\eqref{K_bond} and where, similarly to \eqref{U_transport}, the coefficient~$K$ is chosen such that
\begin{align}
|K|< \frac {1-|\rho q|}{|q|}. \label{boun_K2}
	\end{align}
	The objective of such a control law is preserve a small amount of proximal reflection in the target system to ensure delay-robustness, while eliminating inside-domain couplings.
	\begin{remark}\em
	The control law~$U_{BS_2}$ can be rewritten as 
	\begin{align}
	U_{BS_2}&=-K \alpha(t,1) \nonumber \\
	&+\int_0^1 N^{\alpha}(1,\xi) \alpha(t,\xi)+N^\beta(1,\xi) \beta(t,\xi) d\xi,
	\end{align}
	\end{remark}
	Using the backstepping transformation \eqref{back1}-\eqref{back2}, the system \eqref{u_pde}-\eqref{IC} is mapped to 
\begin{align}
\alpha_t(t,x)+\lambda \alpha_x(t,x)&=0, \label{alpha_eq_mille}\\
\beta_t(t,x)-\mu \beta_x(t,x)&=0,
\end{align}
with the boundary conditions
\begin{align}
&\alpha(t,0)=q\beta(t,0), \nonumber \\
 &\beta(t,1)=(\rho-K) \alpha(t,1). \label{alpha_boun_mille}
\end{align}

\begin{lemma}
The system \eqref{alpha_eq_mille}-\eqref{alpha_boun_mille} is exponentially stable
\end{lemma}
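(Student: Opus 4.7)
The plan is to reduce the PDE system to a scalar delay equation via the method of characteristics, and then exploit the bound on $K$ to guarantee exponential stability of that delay equation, before propagating the decay to the full $L^2$ norm.

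First I would integrate along characteristics. Since $\alpha$ is transported rightward at speed $\lambda$ and $\beta$ leftward at speed $\mu$, for $t$ large enough one has
\begin{align}
\alpha(t,x) &= \alpha(t-\tfrac{x}{\lambda},0) = q\beta(t-\tfrac{x}{\lambda},0),\\
\beta(t,x) &= \beta(t-\tfrac{1-x}{\mu},1).
\end{align}
Substituting these into the boundary condition $\beta(t,1)=(\rho-K)\alpha(t,1)$ and chaining through $x=0$ then $x=1$ yields, after elimination, the scalar difference equation
\begin{align}
\alpha(t,1)=q(\rho-K)\,\alpha(t-\tau,1),
\end{align}
with $\tau=\tfrac{1}{\lambda}+\tfrac{1}{\mu}$ as in \eqref{eq_tau}.

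Second, I would use the constraint \eqref{boun_K2}, namely $|K|<(1-|\rho q|)/|q|$, to estimate
\begin{align}
|q(\rho-K)|\le |q||\rho|+|q||K|<|\rho q|+(1-|\rho q|)=1.
\end{align}
Consequently the zero-order scalar difference equation above is exponentially stable: there exist $C>0$ and $\eta>0$ such that $|\alpha(t,1)|\le Ce^{-\eta t}\sup_{s\in[-\tau,0]}|\alpha(s,1)|$. This is the classical stability criterion for a difference equation with a single delay (see e.g.~\cite{hale2002strong}).

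Third, I would propagate this pointwise decay at $x=1$ back to the full $L^2$ norm of $(\alpha,\beta)$ on $[0,1]$. Using the transport relations above, for $t\ge \tau$, $\beta(t,\cdot)$ is a time-shift of $\beta(\cdot,1)=(\rho-K)\alpha(\cdot,1)$ and $\alpha(t,\cdot)$ is itself a time-shift of $q\beta(\cdot,0)$, so both $\|\alpha(t,\cdot)\|_{L^2}$ and $\|\beta(t,\cdot)\|_{L^2}$ are bounded by a constant multiple of $\sup_{s\in[t-\tau,t]}|\alpha(s,1)|$, which decays exponentially. Combining with well-posedness on $[0,\tau]$ yields exponential stability of $(\alpha,\beta)$ in the $L^2$ norm, and hence (since the backstepping transformation \eqref{back1}--\eqref{back2} and its inverse are bounded on $L^2$) of the original $(u,v)$ system.

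I do not expect any real obstacle here: all the ingredients reduce to elementary characteristics plus the sharp bound on $K$. The only mildly delicate point is handling the transient $t\in[0,\tau]$ before the characteristic representation becomes fully effective, which is a routine continuous-dependence argument from the $L^2$ initial data.
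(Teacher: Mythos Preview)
Your proof is correct and follows essentially the same line as the paper: both reduce the question to showing $|q(\rho-K)|<1$, the classical condition for exponential stability of two pure transport equations coupled only through the boundaries. The paper obtains this inequality by a case analysis on the signs of $q$ and $\rho$, whereas your triangle-inequality argument $|q(\rho-K)|\le |\rho q|+|q||K|<|\rho q|+(1-|\rho q|)=1$ is more direct; you also spell out explicitly the characteristics reduction to a scalar difference equation and the propagation back to the $L^2$ norm, which the paper leaves implicit by simply asserting that the inequality is sufficient.
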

\begin{proof}\em
It is sufficient to prove that~$|q(\rho-K)|<1$. To do so, let us consider all the cases depending on the signs of~$q$ and~$\rho$. If~$\rho>0$ and~$q>0$, we have (using \eqref{boun_K2})
\begin{align}
-1+2\rho q<(\rho-K)q<1 \Rightarrow |(\rho-K)q|<1,
\end{align}
since~$|\rho q|<1$. The other cases can be treated similarly.
\end{proof}
Consequently, the proposed control law stabilizes exponentially the system \eqref{u_pde}-\eqref{IC}. 

\begin{remark}\em
The coefficient~$K$ can be interpreted as a tuning parameter, enabling a trade-off between performance (convergence rate) and robustness with respect to delays. This parameter has a role similar to the coefficient~$\epsilon$ introduced in \cite{auriol2017performance} in the design of the observer to enable a trade-off between performance and noise sensitivity.
\end{remark}

\begin{remark}\em
Another approach to delay-robustly stabilize \eqref{u_pde}-\eqref{IC} would consist in filtering the control law \eqref{control} (as proposed in section~\ref{filter}). This is discussed in Section~\ref{Complements}.
\end{remark}
We need now to prove that the proposed control law is robust with respect to small delays. We have the following theorem.
\begin{theorem}
Consider the control law~$U_{BS_2}$ defined by \eqref{control3} with~$K$ satisfying \eqref{boun_K2}. This control law delay-robustly stabilizes the system \eqref{u_pde}-\eqref{IC} in the sense of Definition~\ref{robust_stabilisation}.
\end{theorem}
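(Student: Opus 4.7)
The plan is to carry out a frequency-domain analysis analogous to the one used in the proofs of Lemma~\ref{Lem_inf_poles} and Theorem~\ref{TH3}, but now exploiting the nominal exponential stability of the modified target system \eqref{alpha_eq_mille}-\eqref{alpha_boun_mille}. I would work in the $(\alpha,\beta)$-coordinates, since the Volterra transformation \eqref{back1}-\eqref{back2} is invertible on $L^2$, so $L^2$-stability in either set of variables is equivalent. Substituting $U_0(t)=U_{BS_2}(t-\delta)$ into the boundary condition \eqref{IC_alpha}, using the expression of $U_{BS_2}$ in the target coordinates given in the remark following \eqref{control3}, and eliminating the interior values of $\alpha$ and $\beta$ via the characteristics \eqref{eq_alpha_1}-\eqref{eq_beta_1}, one derives for $y(t):=\beta(t,1)$ a neutral delay equation of the form
\begin{equation*}
y(t)-\rho q\, y(t-\tau)+Kq\, y(t-\tau-\delta)=\int_0^\tau \tilde N(\nu)\bigl(y(t-\delta-\nu)-y(t-\nu)\bigr)d\nu,
\end{equation*}
with the same $\tilde N$ as in \eqref{eq_N_tilde}.

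Next I would analyze the principal (difference-operator) part. Condition \eqref{boun_K2} rewrites as $|\rho q|+|Kq|<1$, so by the classical stability criterion for zero-order neutral difference equations (\cite{hale2002strong}, already used in \eqref{eq_rho_q}), the characteristic function
\begin{equation*}
F_\delta(s)=1-\rho q\, e^{-\tau s}+Kq\, e^{-(\tau+\delta)s}
\end{equation*}
has all its roots in a half-plane $\{\Re s<-\eta\}$ for some $\eta>0$, and this $\eta$ can be chosen uniformly for $\delta$ in a bounded neighborhood of $0$.

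The main obstacle is transferring this robust stability of the principal part into stability of the full closed-loop equation, which includes the distributed-delay contribution
\begin{equation*}
H_\delta(s)=\int_0^\tau \tilde N(\nu)\bigl(e^{-\nu s}-e^{-(\nu+\delta)s}\bigr)d\nu.
\end{equation*}
The strategy is to mimic the arguments of Lemma~\ref{Lem_inf_poles} and Theorem~\ref{TH3}: since $\tilde N$ is bounded and piecewise regular, integration by parts shows $|H_\delta(s)|\to 0$ as $|s|\to\infty$ in the strip $\{\Re s\geq -\eta/2\}$. Combined with a uniform lower bound $|F_\delta(s)|\geq c>0$ on $\{\Re s\geq -\eta/2,\,|s|\geq R\}$ for $R$ sufficiently large, the generalized Rouché result Theorem~\ref{Rouche_general} then yields that $F_\delta+H_\delta$ has at most finitely many zeros with $\Re s\geq -\eta/2$. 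At $\delta=0$ the sum $F_0+H_0$ simplifies to $1-(\rho-K)q\,e^{-\tau s}$, whose zeros lie in the open left half-plane since $|(\rho-K)q|<1$ by the preceding lemma. Invoking continuity of the finitely many candidate zeros in the parameter~$\delta$, I would conclude that for $\delta>0$ sufficiently small no zero of $F_\delta+H_\delta$ enters $\{\Re s\geq 0\}$, so the closed-loop transfer function is analytic and bounded there. Inverting the backstepping transformation then transports this $L^2$-stability back to the original $(u,v)$-variables, yielding delay-robust stabilization in the sense of Definition~\ref{robust_stabilisation}.

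I expect the control of the finitely many candidate unstable zeros as $\delta\downarrow 0$ to be the most delicate step: it requires both the uniform decay of $F_\delta$ away from the imaginary axis at infinity (to confine the dangerous zeros to a compact set) and enough joint continuity of $F_\delta+H_\delta$ in $(\delta,s)$ to guarantee that these zeros deform continuously from the stable nominal ones as $\delta\to 0$.
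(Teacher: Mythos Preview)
Your proposal is correct and would work, but it takes a more circuitous route than the paper. The paper avoids Rouch\'e counting and the continuity-in-$\delta$ tracking of zeros altogether by a direct triangle-inequality lower bound. Writing the characteristic function as
\[
\boldsymbol{F}(s)=1-\rho q\,e^{-\tau s}+Kq\,e^{-(\tau+\delta)s}-I(s,\delta),\qquad I(s,\delta)=(1-e^{-\delta s})\int_0^\tau \tilde N(\nu)e^{-\nu s}\,d\nu,
\]
the paper observes that condition \eqref{boun_K2} is exactly $|\rho q|+|Kq|<1$, so for every $s$ with $\Re s\ge 0$ and every $\delta\ge 0$ one has the \emph{uniform} lower bound
\[
\bigl|1-\rho q\,e^{-\tau s}+Kq\,e^{-(\tau+\delta)s}\bigr|\ \ge\ 1-|\rho q|-|Kq|\ =:\ \epsilon_0>0.
\]
Thus the whole argument reduces to showing $\sup_{\Re s\ge 0}|I(s,\delta)|<\epsilon_0$ for $\delta$ small enough, which the paper obtains by factoring $I(s,\delta)=(1-e^{-\delta s})\,\mathcal F\bigl(\tilde N(\cdot)e^{-\cdot\Re s}\bigr)(\Im s)$ and combining Riemann--Lebesgue decay in $\Im s$, dominated-convergence decay in $\Re s$, and smallness of $|1-e^{-\delta s}|$ on the remaining compact box. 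No zero-localization or continuity-of-roots argument is needed; $\boldsymbol F$ simply has no zeros in $\{\Re s\ge 0\}$ once $\delta\le\delta_0$.

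Your approach buys a slightly sharper qualitative picture (roots confined to a compact set in a strip, then deformed from the nominal $\delta=0$ configuration), and it mirrors the machinery used in Lemma~\ref{Lem_inf_poles} and Theorem~\ref{TH3}. But the very step you flag as delicate---joint continuity of the finitely many candidate zeros in $(\delta,s)$ together with uniformity of the confining compact set as $\delta\downarrow 0$---is precisely what the paper's elementary bound renders unnecessary. If you keep your route, note that you also need the uniform-in-$\delta$ lower bound on $|F_\delta|$ and uniform decay of $|H_\delta|$ to get a $\delta$-independent compact set; otherwise the ``finitely many zeros'' could escape to infinity as $\delta\to 0$ and the continuity argument collapses.
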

\begin{proof}\em
Consider a positive delay~$\delta$. Consider the two states~$\alpha$ and~$\beta$ defined by \eqref{back1}-\eqref{back2}. Slightly adjusting the method used to derive \eqref{delay_eq_alpha_bis}, we get the following equation satisfied by the output~$\beta(t,1)$.
\begin{align}
&\beta(t,1)=q\rho \beta(t-\tau,1)-qK \beta(t-\tau-\delta,1)\nonumber\\
&-\int_0^\tau \tilde{N}(\xi) (\beta(t-\xi,1) - \beta(t-\delta-\xi,1)) d\xi, 
\label{delay_eq_alpha_ter}
\end{align}
where~$\tilde{N}$ is defined by \eqref{eq_N_tilde}. Taking the Laplace transform yields the following characteristic equation
\begin{align}
\boldsymbol{F}(s)=1-q\rho e^{-\tau s} +K q e^{-(\tau+\delta) s} -I(s,\delta)=0,
\end{align}
where~$I(s, \delta)$ is defined by 
\begin{align}
I(s,\delta)=\int_0^\tau \tilde{N}(\xi) (e^{-\xi s}-e^{-(\xi+\delta)s})  d\xi.
\end{align}
Let us now consider a complex number~$s$ such that~$\Re(s) \geq 0$. We then have
\begin{align}
|\boldsymbol{F}(s)| &\geq  |1-q\rho e^{-\tau s} +K q e^{-(\tau+\delta) s}| -|I(s,\delta)| \nonumber \\
&\geq  1- |q\rho e^{-\tau s}| -|K q e^{-(\tau+\delta) s}| -|I(s,\delta)|\nonumber\\
&\geq 1- |q\rho| -|K q|  -|I(s,\delta)|.
\end{align}
Since~$K$ satisfies \eqref{boun_K2}, there exists~$\epsilon_0>0$ such that 
\begin{align}
1- |q\rho| -|K q|>\epsilon_0.
\end{align}
\color{red}
Let us now focus on the term $I(s,\delta)$. We have 
\begin{align}
|I(s,\delta)|^2=(1+e^{-2\delta x}-2e^{-\delta x}\cos(\delta y))\cdot|\int_0^\tau N(\xi)e^{-\xi(x+iy)}d\xi|^2. \nonumber
\end{align}
The integral on the expression above can be rewritten
\begin{align}
&\int_0^\tau N(\xi)e^{-\xi(x+iy)}d\xi = \mathcal{F}(N(\cdot)e^{ -\cdot x})(y),
\end{align}
where we denote $\mathcal{F}$ the Fourier transform. Since $N(\cdot)$ belong to $L^1(0,\tau)$, this yields 
\begin{align}
\forall x>0,~ \exists M_0 \in \mathbb{R},~ \forall |y| > M_0,~ |\int_0^\tau N(\xi)e^{-\xi(x+iy)}d\xi| < \frac{\epsilon_0}{2} \nonumber
\end{align}
Moreover, one can easily prove (Lebesgues) that 
\begin{align}
\exists M_1 \in \mathbb{R^+},~ \forall |x| > M_1,~\forall y \in \mathbb{C},~ |\int_0^\tau N(\xi)e^{-\xi(x+iy)}d\xi| < \frac{\epsilon_0}{2} \nonumber
\end{align}
We can now choose~$\delta_0$ small enough such that for any~$\delta \leq \delta_0$, $\forall x \in [0,M_1],~\forall y \in [-M_0, M_0],~|I(s,\delta)|<\epsilon_0$. With this choice of $\delta_0$, one can easily check that, $\forall \delta \leq \delta_0$, $\forall s \in \mathbb{C}$ such that~$\Re(s) \geq 0$
\begin{align}
|I(s,\delta)|< \epsilon_0
\end{align}
\color{black}
Consequently, for~$\delta \leq \delta_0$, we have
\begin{align}
|\boldsymbol{F}(s)|>0.
\end{align}
It means that  for~$0<\delta \leq \delta_0$, the function~$\boldsymbol{F}(s)$ does not have any root whose real part is positive. Consequently, equation \eqref{delay_eq_alpha_ter} is asymptotically stable. Thus, using the invertibility of the Volterra transformation \eqref{back1}-\eqref{back2}, this concludes the proof.
\end{proof}
\begin{remark}\em
For a given value of~$K$, the parameter~$\delta_0$ gives a range for admissible delays. However,~$\delta_0$ is not necessarily the maximum admissible delay.
\end{remark}
\subsection{Interpretation of the results and outlook}\label{Complements}
In this section, we analyze the practical consequences of Theorems~\ref{TH2} and~\ref{TH3}.
\subsubsection{Zero delay margins}
It is important to stress that the fundamental limitations of, e.g. Theorem~\ref{TH2} would not apply to an actual plant in the strict sense. Models of the form~\eqref{Modele} are obviously simplistic and do not capture, e.g., the diffusivity that would stem from Kelvin-Voigt damping, or other phenomena that would be susceptible of making the delay margins non-null. However, these results do indicate
\begin{itemize}
\item that the delay-robustness margins would be very \emph{poor} for such systems
\item that controllers of the form~\eqref{Control_law_transport} or~\eqref{control} significantly trade off delay-robustness for performance, making them likely to be unusable. 
\end{itemize}
In this regard, a more quantitative approach to analyzing the performance--delay-robustness trade-offs made available by the use of backstepping is needed, in particular to assess whether the qualitative approach of the present article remains valid with more realistic models. A first step in analyzing this trade-offs has been taken in~\cite{Auriol2017}. In the next sections, we analyze the impact of the results on broader classes of systems.
\subsubsection{Interconnected systems}
An important focus point in the recent literature is the control of interconnected and cascade systems: Ordinary Differential Equations (ODEs) featuring hyperbolic systems in the actuation paths have, in particular, received a lot of attention~\cite{Bekiaris-Liberis2014,bresch2014adaptive,DiMeglio2017,sagert2013backstepping}. A recurrent motivation for studying such systems is the control of mechanical vibrations in drilling, where the hyperbolic PDEs correspond to axial and torsional waves traveling along the drillstring, while the ODE models the Bottom Hole Assembly (BHA) dynamics (see, e.g.,~\cite{bresch2016prediction,DiMeglio2015} for details). The strategy in most approaches consists in transforming the \emph{interconnected} systems into \emph{cascade} systems by canceling the reflection at the controlled boundary. This enables the design of predictor-like feedback laws, that focus on stabilizing the (potentially unstable) ODE. This approach, although rigorously correct, is bound to exhibit poor delay-robustness in practice, as detailed in the previous section. To illustrate this point, we consider the following example.
\begin{example}
Let us consider the following ODE-PDE system
\begin{align}
u_t(t,x)+u_x(t,x)&=0,\\
v_t(t,x)-v_x(t,x)&=0,
\end{align}
with the boundary conditions
\begin{align}
u(t,0)&=qv(t,0)+cx(t),\\
v(t,1)&=\rho u(t,1)+U(t),
\end{align}
where~$x$ satisfies
\begin{align}
\dot{x}=ax(t)+bv(t,0),
\end{align}
where~$a$,~$b$ and~$c$ are non-null constants. Considering~$u(t,1)$ as the output of the system and taking the Laplace transform, we get the following characteristic equation
\begin{align}
\boldsymbol{u}(s,1)=(q+\frac{b}{s-a})e^{-2s}(\rho \boldsymbol{u}(s,1)+\boldsymbol{U}(s)).
\end{align}
This yields
\begin{align}
&\dot{u}(t,1)-\rho q \dot{u}(t-2,1)-q \dot{U}(t-2)=\nonumber \\
&au(t,1)+(b-qa)(\rho u(t-2,1)+U(t-2)).
\end{align}
This equation has a similar structure to that of equation~\eqref{delay_transport3} in presence of a delay in the actuation. Some of the results described in this paper can then be adjusted for ODE-PDEs systems.
\end{example}
 Interestingly, this opens new perspectives for the control of PDE-ODE systems: when the reflection at the controlled boundary is partially or not canceled, the system takes in the general case the form of a neutral equation with distributed delay of the same order as the ODE, similarly to Equation~\eqref{delay_eq_alpha}. The stability of the target and closed-loop system is then subject to restrictive conditions on the coupling terms that have not been canceled by the controller, contrary to the idealized case where the closed-loop system is a cascade. In this regard, the stability analysis methods for such systems developed in~\cite{damak2015stability,niculescu2001delay,niculescu2001delays} will be instrumental.
\subsubsection{Systems with multiple equations}
Some physical systems require more than two equations to be properly modeled. For instance, Drift-Flux Models described in~\cite{Aarsnes2014} representing the flow of liquid and gas along the oil wells consist of three distributed equations of conservation. Along with closure relations, this yields a set of three nonlinear transport PDEs with appropriate boundary conditions. The model can be linearized around a given equilibrium profile, which yields a system of the form~\eqref{u_pde}--\eqref{IC} with~$n=2$ and~$m=1$  (see, e.g.,~\cite{aarsnes2016methodology}). To provide finite-time convergence, using a control law such as the one described in~\cite{di2013stabilization} or~\cite{auriol2016minimum} require to cancel all the reflexion terms. In light of what has been presented in this paper, this does not seem desirable in term of delay-robustness. Applying a transformation similar to the one proposed in section~\ref{transfo_back} and writing the corresponding characteristic equation would lead to a matrix neutral equation with distributed delays. One can then use classical method (\cite{hale2002strong}) to analyze such equations. However, the results presented here do not straightforwardly extend due to the presence of remaining terms in the target system. This will be the purpose of future contributions.
\section{Simulation results}\label{sec:simulation}
In this section we illustrate our results with simulations on a toy problem. The numerical values of the parameters are as follow.
\begin{align}
&\lambda=\mu=\sigma^{+-}=\sigma^{-+}=q=1, \quad \rho=0.85.
\end{align}
The (positive) delay in the loop is denoted~$\delta$. The parameters values are chosen such that 
\begin{itemize}
	\item the open-loop system is unstable (\cite{bastin2016stability})
	\item the open-loop gain satisfies~$\frac{1}{2}<|\rho q| <1$, so that the control law from~\cite{coron2013local} is not robustly stable to small delays (but the system can be delay-robustly stabilized).
\end{itemize}
\begin{figure}[htb]
		\includegraphics[width=\columnwidth]{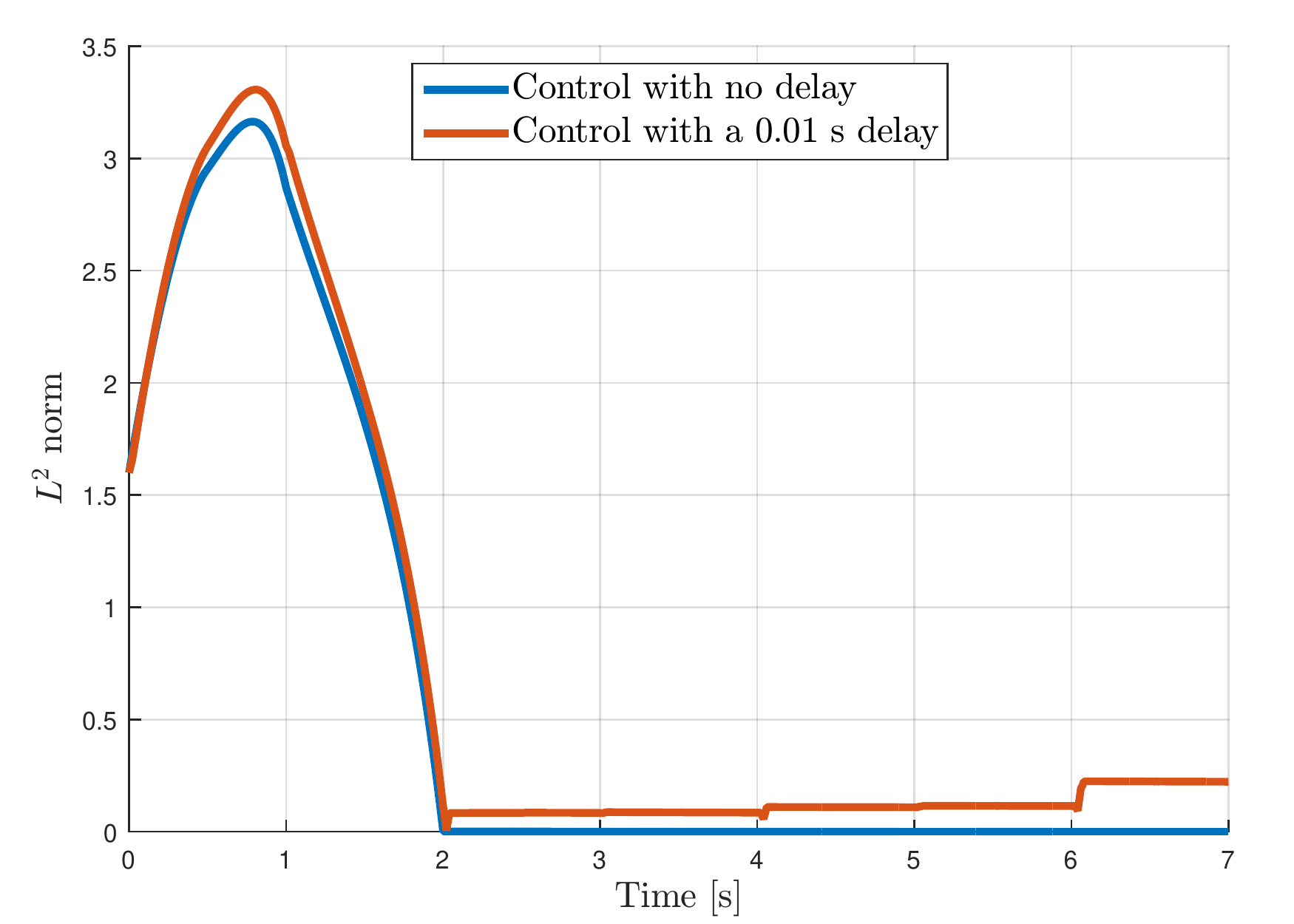}
	\caption{Time evolution of the L2-norm using the control law from~\cite{coron2013local} in presence of a delay}
	\label{fig:Controller Vazquez}
\end{figure}
Figure~$1$ pictures the~$\mathcal{L}^2-$norm of the state~$(u,v)$ using the control law presented in~\cite{coron2013local} without any delay ($\delta=0$~s) and then in presence of a small delay in the loop ($\delta=0.01$~s). As expected by the theory, with this control law, the system converges in finite time to its zero-equilibrium when there is no delay in the loop but becomes unstable in presence of a small delay.  
\begin{figure}[htb]
		\includegraphics[width=\columnwidth]{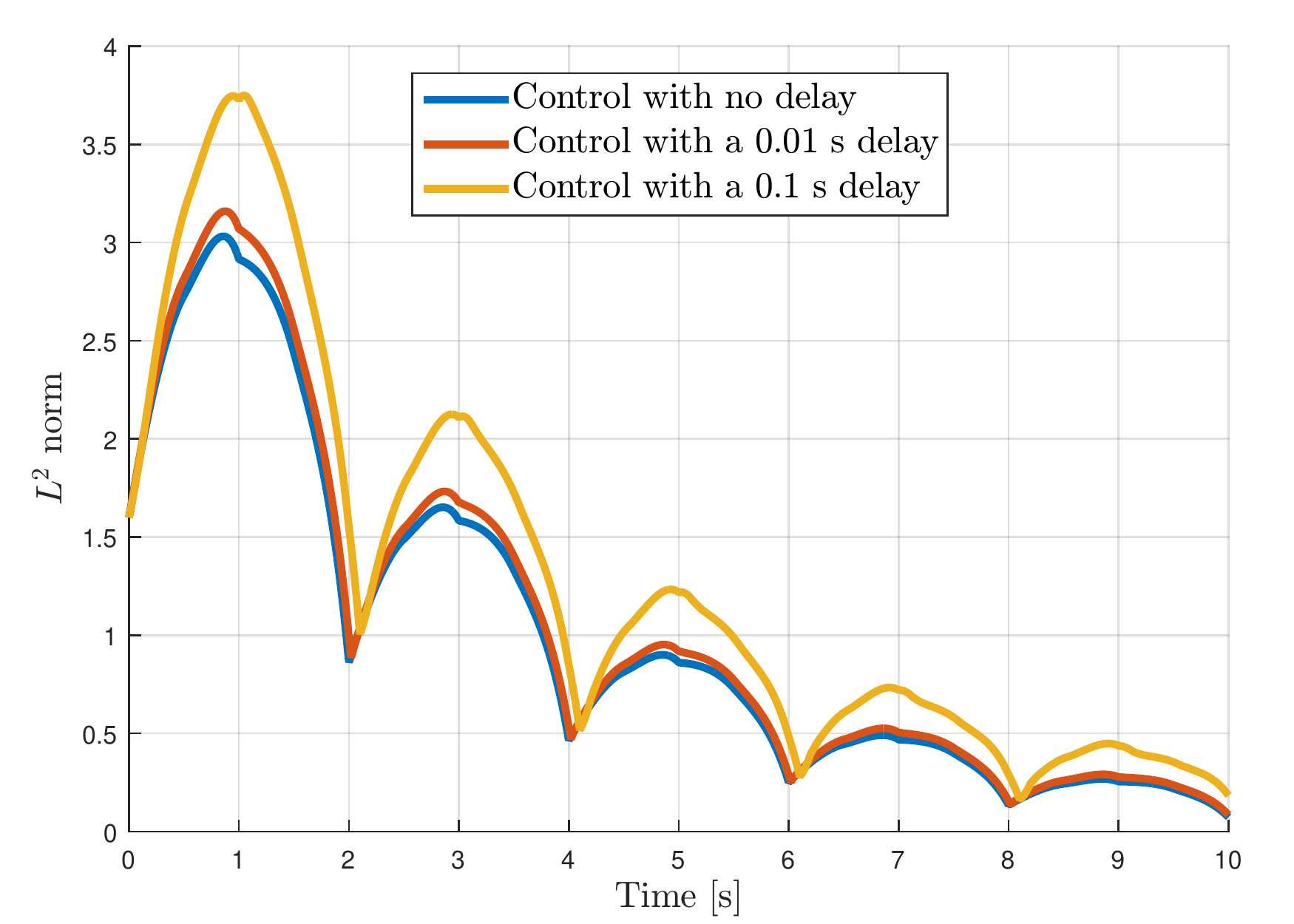}
	\caption{Time evolution of the L2-norm using the control law \eqref{control3} in presence of a delay}
	\label{fig:Controller paper}
\end{figure}
Figure~$3$ pictures the~$\mathcal{L}^2-$norm of the state~$(u,v)$ using the new control law \eqref{control3} ($K$ is chosen equal to~$0.1$) for the same situations ($\delta=0$ and~$\delta=0.01$~s) and for a larger delay ($\delta=0.1$~s). As expected by the theory, the system is now robustly stable to delays in the loop. However, this improvement in terms of delay margin comes at the cost of a diminution of the convergence rate. This example illustrates the importance of this change of paradigm.

\section{Concluding remarks}\label{sec:conclusion}
Inspired by the results obtained for a system of two transport equations, we have proved that control laws ensuring finite-time stabilization are often not robust to arbitrarily small delays in the loop due to the complete cancellation of the proximal reflection.  
Consequently it appeared essential to make a change of paradigm, focusing on delay-robust stabilization. This has been done by means of a tuning parameter enabling a trade-off between convergence rate and delay-robustness. Even if the delay-robustness properties of the output-feedback controller (crucial for any application on an industrial problem) were not studied there, the result presented in this paper is a new step towards a complete analysis of the properties of the backstepping controller. This change of paradigm implies to rethink the controller-design for PDEs-ODEs systems or systems with more than two PDEs. That will be the purpose of future contributions. 

\section*{Acknowledgment}
We thank Laurent Praly, Delphine Bresch-Pietri, Sebastien Boisgerault and Miroslav Krstic for their valuable comments.\\
The work of the second author was supported by the Research Council of Norway, ConocoPhillips, Det norske oljeselskap, Lundin, Statoil and Wintershall through the research center DrillWell (203525/O30) at IRIS.

\appendix

\section{Appendix}
In this appendix, we prove an important result of complex analysis. Let us consider some strictly positive integers~$n$ and~$m$, a sequence of constant matrices~$A_k$~$\in \mathcal{M}(n,n)$ and a sequence of positive constants~$\tau_k$. We consider the holomorphic function~$F$ defined for every complex number~$s$ by
\begin{align}
F(s)=I_n-\sum_{k=0}^m A_k e^{-s\tau_k},
\end{align}
where~$I_n$ is the identity matrix of dimension~$n$. For any real number~$\sigma>0$, we denote~$P_\sigma$ the open half-plane~$\{s \in \mathbb{C}~|~\Re(s) > \sigma \}$. 
We have the following general theorem
\begin{theorem}\label{Rouche_general}
Let us consider an holomorphic function~$H$ such that 
\begin{align}
|H(s)| \underset{|s|\to+\infty}{\longrightarrow} 0.
\end{align} 
If the function~$det(F)$ has an infinite number of zeros on~$P_\sigma$, then the function~$det(F+H)$ has an infinite number of zeros whose real parts are strictly positive.
\end{theorem}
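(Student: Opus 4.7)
The plan is to apply Rouché's theorem on a sequence of small contours encircling zeros of $\det(F)$ inside $P_\sigma$ whose moduli tend to infinity. Since $|H(s)|\to 0$ as $|s|\to\infty$, the perturbation on far-out contours becomes negligible compared to $\det(F)$, and each such contour must then also enclose a zero of $\det(F+H)$. Because the contours can be kept inside $P_{\sigma/2}$, those zeros automatically have strictly positive real parts.

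First I would handle the determinantal algebra. Expanding $\det(F+H)$ by multilinearity of the determinant with respect to the rows of its argument yields
\[
\det(F+H)=\det(F)+R(F,H),
\]
where $R(F,H)$ is a finite sum of determinants, each of which contains at least one row taken from $H$. Since $\|F(s)\|\leq C$ on the closed right half-plane (as $|e^{-\tau_k s}|\leq 1$ there) and $H$ is bounded on that region by hypothesis, the polynomial-in-entries form of $R$ yields a bound
\[
|R(F(s),H(s))|\leq C_1\,\|H(s)\|,
\]
with $\|H(s)\|\to 0$ as $|s|\to\infty$.

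Next I would exploit the structure of the zeros of $\det(F)$. By hypothesis there is an infinite sequence $\{s_j\}\subset P_\sigma$ of such zeros; since $\det(F)$ is holomorphic and not identically zero, its zeros are isolated, and so $|s_j|\to\infty$. Around each $s_j$ I would take a circle $\gamma_j$ of radius $r_j$ small enough to remain inside $P_{\sigma/2}$ and to exclude every other zero of $\det(F)$ from its closed disk, with the aim of securing a lower bound $m_j:=\min_{s\in\gamma_j}|\det(F(s))|>0$. Once such a bound is established, for $j$ large enough the inequality $C_1\|H(s)\|<m_j$ holds on $\gamma_j$, and Rouché's theorem applied to $\det(F)$ and $R(F,H)$ delivers a zero of $\det(F+H)$ in the disk bounded by $\gamma_j$.

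The main obstacle is the quantitative control of $m_j$: one needs circles along which $|\det(F)|$ stays bounded below by a quantity that eventually exceeds the $o(1)$ contribution from $H$. For a generic holomorphic function this need not hold, but the exponential-polynomial (quasi-polynomial) nature of $\det(F)$ as a function of the $e^{-\tau_k s}$ is what rescues the argument: by Pontryagin's theory, the zeros of a quasi-polynomial lie asymptotically in finitely many vertical strips with a well-controlled local multiplicity structure, so that along a suitable subsequence of $\{s_j\}$ one can choose, for instance, $r_j$ fixed and $m_j$ of order $r_j^{k_j}$, where $k_j$ denotes the multiplicity at $s_j$. Applying Rouché on each of these contours then yields infinitely many zeros of $\det(F+H)$ in the open right half-plane.
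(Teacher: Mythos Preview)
Your overall architecture matches the paper's: expand $\det(F+H)=\det(F)+R(F,H)$, note that $|R|\to 0$ at infinity because $F$ is bounded on the closed right half-plane, and apply Rouch\'e's theorem on contours that encircle zeros of $\det(F)$ in $P_\sigma$. The divergence is precisely at the step you flag as ``the main obstacle'': obtaining a lower bound on $|\det(F)|$ along the contours that does not collapse as $|s_j|\to\infty$.

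Your proposed fix does not close this gap. Saying that $m_j$ is ``of order $r_j^{k_j}$'' only records the local Taylor behaviour $\det(F)(s)\approx c_j(s-s_j)^{k_j}$; the constant $c_j$ depends on $j$, and nothing in Pontryagin's asymptotic description of quasi-polynomial zeros gives you a uniform lower bound on $c_j$ or a uniform upper bound on $k_j$. Worse, zeros of $\det(F)$ in a vertical strip can come arbitrarily close to one another, so a circle of fixed radius around $s_j$ may contain other zeros, driving $m_j$ to zero regardless of multiplicity. In short, circles around individual zeros are the wrong contours.

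The paper handles both issues at once via a preparatory lemma. It considers the $\eta$-neighbourhood $Z_\eta$ of the full zero set and shows (using the density bound $N(\rho)=O(\rho)$ for exponential polynomials) that for $\eta$ small enough every connected component of $Z_\eta$ is bounded and stays inside $P_{\sigma-\epsilon}$ once it meets $P_\sigma$. Then, exploiting the almost-periodicity of $F$ in the imaginary direction through a Montel/Hurwitz normal-families argument, it proves a \emph{uniform} lower bound $|\det(F)|\geq\kappa>0$ on $P_{\sigma-\epsilon}\setminus Z_\eta$. Rouch\'e is then applied not on small circles but on the boundaries $\Gamma_k$ of these connected components, where the single constant $\kappa$ beats $|R|$ for all far-out components simultaneously. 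This uniform bound is the missing idea in your proposal; the appeal to Pontryagin is not a substitute for it.
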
 
To prove this theorem, we slightly adjust the proof from~\cite{boisgerault2013growth}. For any positive~$\eta$, we denote~$Z_{\eta}$ the set of complex numbers whose distance to the zeros of det$(F)$ is at most~$\eta$:
\begin{align}
Z_{\eta}=\{z \in \mathbb{C}~|~\exists s \in \mathbb{C},~\det(F)(s)=0~\text{and}~|s-z|<\eta\}.
\end{align}
We start proving the following lemma
\begin{lemma}\label{Lemma_key}\textbf{-zero clusters and lower bound.} Let us consider~$\sigma>0$ and~$\epsilon>0$. There exists~$\eta>0$ such that any connected component~$\Lambda$ of the set~$Z_\eta$ is bounded and such that~$\Lambda \subset P_{\sigma-\epsilon}$ if~$\Lambda \cap P_\sigma \ne \emptyset$. Moreover, there exists~$\kappa>0$ such that~$|det(F)|\geq \kappa$ on~$P_{\sigma-\epsilon}\backslash Z_\eta$ 
\end{lemma}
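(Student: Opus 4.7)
\medskip\noindent\emph{Proof proposal.}
The plan is to exploit the structure of $\det F(s)$ as an entire exponential polynomial: expanding $\det(I_n - \sum_k A_k e^{-s\tau_k})$ gives
\[
\det F(s) = 1 - \sum_{j=1}^N c_j e^{-s \omega_j},
\]
where each $\omega_j \geq 0$ is a finite sum of some $\tau_k$'s and each $c_j$ is a constant. Since $\det F(s) \to 1$ as $\Re(s) \to +\infty$, the zeros of $\det F$ have real parts bounded above. By the classical theory of zeros of exponential polynomials (Polya--Langer, Bellman--Cooke), apart from at most finitely many exceptions the zeros cluster asymptotically along a finite family of vertical lines $\{\Re(s)=r_\ell\}_{\ell=1}^L$, with a positive lower bound $d_0 > 0$ on the imaginary-part spacing between consecutive zeros lying near the same line.

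First, I would fix $\sigma > 0$ and $\epsilon > 0$ and choose $\eta > 0$ small enough to satisfy, simultaneously: (i)~$2\eta < d_0$, so that $\eta$-disks around consecutive zeros on the same asymptotic line are disjoint; (ii)~$2\eta$ is strictly less than both $\epsilon$ and the minimum horizontal gap between distinct asymptotic lines; and (iii)~$\eta$ is less than the minimum pairwise distance among the finitely many exceptional zeros. With this $\eta$, every connected component $\Lambda$ of $Z_\eta$ is either a single disk around an exceptional zero or a finite union of disks around zeros concentrated in a bounded range of imaginary parts near a single asymptotic line. In either case $\Lambda$ is bounded and its horizontal extent is at most $2\eta < \epsilon$. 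If moreover $\Lambda \cap P_\sigma \neq \emptyset$, some $s_0 \in \Lambda$ has $\Re(s_0) > \sigma$, hence every $s' \in \Lambda$ satisfies $\Re(s') > \Re(s_0) - 2\eta > \sigma - \epsilon$, so $\Lambda \subset P_{\sigma - \epsilon}$.

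For the lower bound, I would argue by contradiction. If no $\kappa > 0$ worked, I could extract a sequence $(s_n) \subset P_{\sigma-\epsilon}\setminus Z_\eta$ with $\det F(s_n) \to 0$. The bounded case is ruled out by continuity and isolation of zeros: any accumulation point $s_\infty$ would satisfy $\det F(s_\infty) = 0$, contradicting $|s_n - s_\infty| \geq \eta$ eventually. The unbounded case forces $|\Im(s_n)| \to \infty$ with $\Re(s_n)$ confined to a bounded horizontal strip, and I would handle it by using that on each such strip $\det F$ is well-approximated by a finite linear combination of its dominant exponential terms, whose modulus is uniformly bounded away from zero on the complement of a small neighborhood of the asymptotic zero clusters.

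The main obstacle is making the asymptotic zero-distribution statement quantitative enough to produce both the uniform vertical spacing bound $d_0$ and the uniform lower bound on $|\det F|$ outside the $\eta$-clusters. These rely on precise growth and zero-distribution estimates for exponential polynomials in vertical strips, which are exactly the techniques developed in~\cite{boisgerault2013growth} that the authors indicate they adapt.
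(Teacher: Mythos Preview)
Your argument hinges on a uniform lower bound $d_0>0$ on the imaginary-part spacing between consecutive zeros of $\det F$ near each asymptotic vertical line. This is not true in general for exponential polynomials with several incommensurate exponents. A concrete obstruction: the polynomial $(1-e^{-s})(1-e^{-\sqrt{2}s})=1-e^{-s}-e^{-\sqrt{2}s}+e^{-(1+\sqrt{2})s}$ is of the required form, and its zero set on the imaginary axis is $\{2\pi i k\}\cup\{\sqrt{2}\pi i k\}$, whose elements come arbitrarily close together because $\sqrt{2}$ is irrational. Hence no uniform $d_0$ exists, and your choice (i)~$2\eta<d_0$ cannot be made; with it collapses the claim that each component of $Z_\eta$ is a single $\eta$-disk of horizontal extent $2\eta$, on which both your boundedness and your $\Lambda\subset P_{\sigma-\epsilon}$ arguments rest. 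Your treatment of the unbounded case in the lower-bound step (``$\det F$ is well approximated by its dominant exponential terms'') is also too vague to close the argument once the single-disk picture is gone.

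The paper avoids these traps by working only with a \emph{density} bound on zeros rather than a spacing bound: Levinson's theorem gives $\limsup_{\rho\to\infty}N(\rho)/\rho\le K/\pi$, so an unbounded component of $Z_\eta$ (which would force an infinite chain of zeros at mutual distance $\le 2\eta$ and hence density $\ge 1/(2\eta)$) is impossible once $\eta<\pi/(2K)$. For the remaining two assertions the paper uses a normal-families argument: the vertical translates $f_p(s)=\det F(s+i\Im s_p)$ are locally uniformly bounded, so Montel and Hurwitz give a uniform bound on the number of zeros in any small disk along a sequence with $\Re s_p\to x$. This both rules out components of $Z_{1/(2p)}$ stretching from $P_\sigma$ into the complement of $P_{\sigma-\epsilon}$ (they would force arbitrarily many zeros in a fixed disk) and produces the lower bound on $|\det F|$ outside $Z_\eta$ (a sequence with $\det F(s_p)\to 0$ would force a zero within $\eta$ of some $s_{p_0}$). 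If you want to salvage your line of attack, replace the spacing hypothesis by this density/normal-families machinery.
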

\begin{proof}\em
This proof is similar to the one given in~\cite{boisgerault2013growth}. By continuity of the determinant we have
\begin{align}
\underset{\Re(s) \to+\infty}{\lim} \det(F(s))=1, \label{eq_1_pr}
\end{align}
Let us denote~$N(\rho)$ the number of zeros of~$det(F)$ whose modulus is smaller than~$\rho$. The function~$s \mapsto \det F(is)$ satisfies the assumptions of~\cite[Theorem VIII]{levinson1940gap}. Thus, there exists a positive constant~$K$ such that
\begin{align}
 \underset{\rho \to+\infty}{\limsup} ~\frac{N(\rho)}{\rho} \leq \frac{K}{\pi}.
\end{align}
If there is an unbounded connected component of~$Z_\eta$, then there exists a sequence ($z_p$) of distinct zeros of~$\det(F)$ such that for any~$p \in \mathbb{N}$,~$|z_{p+1}-z_p| \leq 2\eta$. It yields
\begin{align}
\underset{\rho \to+\infty}{\limsup} ~\frac{N(\rho)}{\rho} \geq \frac{1}{2\eta}.
\end{align}
Choosing~$\eta < \frac{\pi}{2k}$ implies that any connected component of the set~$Z_\eta$ is bounded.

Consider now the following complex analysis result. Let us consider a sequence~$s_p$ of numbers in~$P_{\sigma-\epsilon}$ such that 
\begin{align}
\Re s_p \underset{p \to+\infty}{\longrightarrow} x \in \mathbb{R}.
\end{align}
The sequence~$f_p(s)=\det(F(s+i\Im s_p))$ is locally bounded on~$\mathbb{C}$ uniformly in $p$. Consequently, there exists a sub-sequence that converges locally uniformly to an entire function~$f_{\text{lim}}$ (Montel's theorem). Due to \eqref{eq_1_pr}, this function cannot be identically zero. If~$f_{\text{lim}}=0$, we define~$m$ as the multiplicity of~$x$. Otherwise we set~$m=0$. Using Hurwitz's theorem, we get that for~$\alpha>0$ small enough, and for~$p$ large enough,~$f_p$ has precisely~$m$ zeroes in the open disk~$B(x,\alpha)$. Since~$f_p(x)=\det(F(x+ i\Im s_p))=\det(F(s_p))$, it yields that~$\det(F)~\text{has~$m$ zeros in B($s_p,\alpha)$}$.

Let us consider a sequence~$\Lambda_p$ of bounded connected components of~$Z_{\frac{1}{2p}}$, defined for~$p$ large enough such that~$\Lambda_p \cap P_\sigma \ne \emptyset$ and~$\Lambda_p \backslash P_{\sigma-\epsilon} \ne \emptyset$. For any such~$p$ and for~$0<\alpha < \epsilon$ small enough, we can find a real~$y_p$ such that~$\det(F)$ has a least~$p\alpha-1$ zeros in the open disk~$B(\sigma+iy_n,\alpha)$. This contradicts the result of the previous paragraph for the sequence~$s_p=\sigma+iy_p$. 

By the contradiction, let us assume that~$|det(F)|$ has no strictly positive lower bound on~$P_{\sigma-\epsilon}\backslash Z_\eta$. We can then find a sequence~$s_p$ such that
\begin{align}
 \det(F)(s_p)  \underset{p \to+\infty}{\longrightarrow}0.
\end{align}
Because of \eqref{eq_1_pr}, this sequence can be selected such that~$\Re s_p$ converges to~$x \in \mathbb{R}$. It yields
\begin{align}
f_{\lim}(x)&=\underset{p \to+\infty}{\lim} f_p(x)=\underset{p \to+\infty}{\lim} f_p(s_p-\Im s_p)\\
&=\underset{p \to+\infty}{\lim} \det(F(s_p))=0.
\end{align}
Consequently, there is an integer~$p_0$ such that~$\det(F)$ has at least one zero in~$B(s_{p_0},\eta)$. So,~$s_{p_0} \in Z_{\eta}$ which is a contradiction.
\end{proof}
We now prove Theorem~\ref{Rouche_general}
\begin{proof}\em
Let us consider~$0<\epsilon<\sigma$,~$\det(F)$ has an infinite number of zeros on~$P_{\sigma}$. Let~$\eta>0$ be such that any connected component~$\Lambda$ of~$Z_\eta$ that contains such a zero is bounded and included in~$P_\epsilon$. Since the zeros of~$\det(F)$ are isolated, every~$\Lambda$ contains a finite number of zeros, and the collection of sets~$\Lambda$ is infinite.

Let us consider~$\Lambda_k$ a sequence of connected component of~$Z_\eta$. Since these components are bounded, we can define~$\Gamma_k$ as the closed contour of~$\Lambda_k$. Due to Lemma-\ref{Lemma_key}, there exists~$M>0$ such that 
\begin{align}
\forall k \in \mathbb{N},~\forall s \in \Gamma_k,~|\det(F(s))|>M.
\end{align} 
By assumption~$H$ is such that
\begin{align}
|H(s)| \underset{|s|\to+\infty}{\longrightarrow} 0.
\end{align}
Since~$F$ is upper-bounded, it yields (developing the determinant) 
\begin{align}
|\det((F+H)(s))-\det(F(s))| \underset{|s|\to+\infty}{\longrightarrow} 0.
\end{align}
Consequently, for~$k$ large enough, we get
\begin{align}
\forall s \in \Gamma_k,~|\det((F+H)(s))-\det(F(s))|<M.
\end{align} 
Rouch$\acute{e}$'s theorem implies that~$\det(F)$ and~$\det(F+H)$ have the same number of zeros inside of each~$\Gamma_k$ ($k>k_0$). Consequently~$\det(F+H)$ has an infinite number of zeros on~$P_\epsilon$. So~$\det(F+H)$ has an infinite number of zeros whose real part is strictly positive. This concludes the proof.
\end{proof}

\bibliographystyle{plain}  
\bibliography{Biblio}

\begin{thebibliography}{10}

\bibitem{Aarsnes2014}
U.~J.~F. Aarsnes, F.~Di~Meglio, S.~Evje, and O.-M. Aamo.
\newblock Control-oriented drift-flux modeling of single and two-phase flow for
  drilling.
\newblock In {\em Proceedings of the ASME 2014 Dynamic Systems and Control
  Conference, San Antonio, USA, October 22-24}, 2014.

\bibitem{aarsnes2013limits}
Ulf Jakob~F Aarsnes, Ole~Morten Aamo, Espen Hauge, and Alexey Pavlov.
\newblock Limits of controller performance in the heave disturbance attenuation
  problem.
\newblock In {\em Control Conference (ECC), 2013 European}, pages 1071--1076.
  IEEE, 2013.

\bibitem{aarsnes2016methodology}
Ulf Jakob~F Aarsnes, Florent Di~Meglio, Robert Graham, Ole~Morten Aamo, et~al.
\newblock A methodology for classifying operating regimes in
  underbalanced-drilling operations.
\newblock {\em SPE Journal}, 21(02):423--433, 2016.

\bibitem{aarsnes2014modeling}
Ulf Jakob~Fl{\o} Aarsnes, Martin~Standal Gleditsch, Ole~Morten Aamo, Alexey
  Pavlov, et~al.
\newblock Modeling and avoidance of heave-induced resonances in offshore
  drilling.
\newblock {\em SPE Drilling \& Completion}, 29(04):454--464, 2014.

\bibitem{anfinsen2016adaptive}
Henrik Anfinsen, Mamadou Diagne, Ole~Morten Aamo, and Miroslav Krstic.
\newblock An adaptive observer design for $ n+ 1$ coupled linear hyperbolic
  pdes based on swapping.
\newblock {\em IEEE Transactions on Automatic Control}, 61(12):3979--3990,
  2016.

\bibitem{anfinsen2016boundary}
Henrik Anfinsen, Mamadou Diagne, Ole~Morten Aamo, and Miroslav Krstic.
\newblock Boundary parameter and state estimation in general linear hyperbolic
  pdes.
\newblock {\em IFAC-PapersOnLine}, 49(8):104--110, 2016.

\bibitem{Auriol2017}
J.~Auriol, U.~J. Aarsnes, and F.~Di~Meglio.
\newblock Performance trade-offs in the observer design of a $2\times 2$ linear
  hyperbolic system.
\newblock {\em submitted to IEEE Conference on Decision and Control}, 2017.

\bibitem{auriol2017performance}
Jean Auriol, Ulf Jakob~F Aarsnes, and Florent Di~Meglio.
\newblock Performance trade-offs in the observer design of a 2$\times$ 2 linear
  hyperbolic system.
\newblock {\em Preprints - American Chemical Society, Division of Petroleum
  Chemistry}, 2017.

\bibitem{auriol2016minimum}
Jean Auriol and Florent Di~Meglio.
\newblock Minimum time control of heterodirectional linear coupled hyperbolic
  pdes.
\newblock {\em Automatica}, 71:300--307, 2016.

\bibitem{bastin2016stability}
Georges Bastin and Jean-Michel Coron.
\newblock {\em Stability and boundary stabilization of 1-d hyperbolic systems}.
\newblock Springer, 2016.

\bibitem{Bekiaris-Liberis2014}
N.~Bekiaris-Liberis and M.~Krstic.
\newblock Compensation of wave actuator dynamics for nonlinear systems.
\newblock {\em IEEE Transactions on Automatic Control}, 59(6):1555--1570, June
  2014.

\bibitem{boisgerault2013growth}
S{\'e}bastien Boisg{\'e}rault.
\newblock Growth bound of delay-differential algebraic equations.
\newblock {\em Comptes Rendus Mathematique}, 351(15):645--648, 2013.

\bibitem{bresch2016prediction}
Delphine Bresch-Pietri and Florent Di~Meglio.
\newblock Prediction-based control of linear input-delay system subject to
  state-dependent state delay-application to suppression of mechanical
  vibrations in drilling.
\newblock {\em IFAC-PapersOnLine}, 49(8):111--117, 2016.

\bibitem{bresch2010delay}
Delphine Bresch-Pietri and Miroslav Krstic.
\newblock Delay-adaptive predictor feedback for systems with unknown long
  actuator delay $ $.
\newblock {\em IEEE Transactions on Automatic Control}, 55(9):2106--2112, 2010.

\bibitem{bresch2014adaptive}
Delphine Bresch-Pietri and Miroslav Krstic.
\newblock Adaptive output feedback for oil drilling stick-slip instability
  modeled by wave pde with anti-damped dynamic boundary.
\newblock In {\em American Control Conference (ACC), 2014}, pages 386--391.
  IEEE, 2014.

\bibitem{coron1999lyapunov}
Jean-Michel Coron, Brigitte Andr{\'e}a-Novel, and Georges Bastin.
\newblock A lyapunov approach to control irrigation canals modeled by
  saint-venant equations.
\newblock In {\em Proc. European Control Conference, Karlsruhe}, 1999.

\bibitem{coron2017finite}
Jean-Michel Coron, Long Hu, and Guillaume Olive.
\newblock Finite-time boundary stabilization of general linear hyperbolic
  balance laws via fredholm backstepping transformation.
\newblock {\em arXiv preprint arXiv:1701.05067}, 2017.

\bibitem{coron2013local}
Jean-Michel Coron, Rafael Vazquez, Miroslav Krstic, and Georges Bastin.
\newblock Local exponential $h^2$ stabilization of a 2$\times$ 2 quasilinear
  hyperbolic system using backstepping.
\newblock {\em SIAM Journal on Control and Optimization}, 51(3):2005--2035,
  2013.

\bibitem{curtain2009transfer}
Ruth Curtain and Kirsten Morris.
\newblock Transfer functions of distributed parameter systems: A tutorial.
\newblock {\em Automatica}, 45(5):1101--1116, 2009.

\bibitem{damak2015stability}
S{\'e}rine Damak, Michael Di~Loreto, and Sabine Mondi{\'e}.
\newblock Stability of linear continuous-time difference equations with
  distributed delay: Constructive exponential estimates.
\newblock {\em International Journal of Robust and Nonlinear Control},
  25(17):3195--3209, 2015.

\bibitem{datko1986example}
R~Datko, J~Lagnese, and MP~Polis.
\newblock An example on the effect of time delays in boundary feedback
  stabilization of wave equations.
\newblock {\em SIAM Journal on Control and Optimization}, 24(1):152--156, 1986.

\bibitem{de2003boundary}
Jonathan de~Halleux, Christophe Prieur, J-M Coron, Brigitte d'Andr{\'e}a Novel,
  and Georges Bastin.
\newblock Boundary feedback control in networks of open channels.
\newblock {\em Automatica}, 39(8):1365--1376, 2003.

\bibitem{DiMeglio2015}
F.~Di~Meglio and U.~J.~F. Aarsnes.
\newblock A distributed parameter systems view of control problems in drilling.
\newblock In {\em to appear in 2nd IFAC Workshop on Automatic Control in
  Offshore Oil and Gas Production, Florian\'opolis, Brazil}, 2015.

\bibitem{DiMeglio2017}
F.~Di~Meglio, F.~Bribiesca~Argomedo, L.~Hu, and M.~Krstic.
\newblock Stabilization of coupled linear heterodirectional hyperbolic pde-ode
  systems.
\newblock {\em accepted for publication in Automatica}, 2017.

\bibitem{di2011dynamics}
Florent Di~Meglio.
\newblock {\em Dynamics and control of slugging in oil production}.
\newblock PhD thesis, {\'E}cole Nationale Sup{\'e}rieure des Mines de Paris,
  2011.

\bibitem{di2013stabilization}
Florent Di~Meglio, Rafael Vazquez, and Miroslav Krstic.
\newblock Stabilization of a system of $ n+ 1$ coupled first-order hyperbolic
  linear pdes with a single boundary input.
\newblock {\em IEEE Transactions on Automatic Control}, 58(12):3097--3111,
  2013.

\bibitem{dudret2012stability}
St{\'e}phane Dudret, Karine Beauchard, Fouad Ammouri, and Pierre Rouchon.
\newblock Stability and asymptotic observers of binary distillation processes
  described by nonlinear convection/diffusion models.
\newblock In {\em American Control Conference (ACC), 2012}, pages 3352--3358.
  IEEE, 2012.

\bibitem{egeland2002modeling}
Olav Egeland and Jan~Tommy Gravdahl.
\newblock {\em Modeling and simulation for automatic control}, volume~76.
\newblock Marine Cybernetics Trondheim, Norway, 2002.

\bibitem{fabiano2013stability}
Richard~H Fabiano.
\newblock A stability result for a scalar neutral equation with multiple
  delays.
\newblock In {\em Decision and Control (CDC), 2013 IEEE 52nd Annual Conference
  on}, pages 1089--1094. IEEE, 2013.

\bibitem{hale2002strong}
Jack~K Hale and Sjoerd M~Verduyn Lunel.
\newblock Strong stabilization of neutral functional differential equations.
\newblock {\em IMA Journal of Mathematical Control and Information}, 19(1 and
  2):5--23, 2002.

\bibitem{hu2016control}
Long Hu, Florent Di~Meglio, Rafael Vazquez, and Miroslav Krstic.
\newblock Control of homodirectional and general heterodirectional linear
  coupled hyperbolic pdes.
\newblock {\em IEEE Transactions on Automatic Control}, 61(11):3301--3314,
  2016.

\bibitem{karafyllis2014relarion}
Iasson Karafyllis and Miroslav Krtic.
\newblock On the relation of delay equations to first-order hyperbolic partial
  differential equations.
\newblock {\em ESAIM: Control, Optimisation and Calculus of Variations},
  20(3):894--923, 2014.

\bibitem{krstic2008lyapunov}
Miroslav Krstic.
\newblock Lyapunov tools for predictor feedbacks for delay systems: Inverse
  optimality and robustness to delay mismatch.
\newblock {\em Automatica}, 44(11):2930--2935, 2008.

\bibitem{kyllingstad2009new}
Age Kyllingstad, Pal~Jacob Nessj{\o}en, et~al.
\newblock A new stick-slip prevention system.
\newblock In {\em SPE/IADC Drilling Conference and Exhibition}. Society of
  Petroleum Engineers, 2009.

\bibitem{levinson1940gap}
Norman Levinson.
\newblock {\em Gap and density theorems}, volume~26.
\newblock American mathematical society New York, 1940.

\bibitem{li2010strong}
Tatsien Li and Bopeng Rao.
\newblock Strong (weak) exact controllability and strong (weak) exact
  observability for quasilinear hyperbolic systems.
\newblock {\em Chinese Annals of Mathematics, Series B}, 31(5):723--742, 2010.

\bibitem{logemann1996conditions}
Hartmut Logemann, Richard Rebarber, and George Weiss.
\newblock Conditions for robustness and nonrobustness of the stability of
  feedback systems with respect to small delays in the feedback loop.
\newblock {\em SIAM Journal on Control and Optimization}, 34(2):572--600, 1996.

\bibitem{niculescu2001delay}
Silviu-Iulian Niculescu.
\newblock {\em Delay effects on stability: a robust control approach}, volume
  269.
\newblock Springer Science \& Business Media, 2001.

\bibitem{niculescu2001delays}
Silviu-Iulian Niculescu.
\newblock On delay-dependent stability under model transformations of some
  neutral linear systems.
\newblock {\em International Journal of Control}, 74(6):609--617, 2001.

\bibitem{sagert2013backstepping}
Conrad Sagert, Florent Di~Meglio, Miroslav Krstic, and Pierre Rouchon.
\newblock Backstepping and flatness approaches for stabilization of the
  stick-slip phenomenon for drilling.
\newblock {\em IFAC Proceedings Volumes}, 46(2):779--784, 2013.

\bibitem{thorp1998electromechanical}
James~S Thorp, Charles~E Seyler, and Arun~G Phadke.
\newblock Electromechanical wave propagation in large electric power systems.
\newblock {\em IEEE Transactions on Circuits and Systems I: Fundamental Theory
  and Applications}, 45(6):614--622, 1998.

\bibitem{xu2002exponential}
Cheng-Zhong Xu and Gauthier Sallet.
\newblock Exponential stability and transfer functions of processes governed by
  symmetric hyperbolic systems.
\newblock {\em ESAIM: Control, Optimisation and Calculus of Variations},
  7:421--442, 2002.

\end{thebibliography}

\begin{IEEEbiographynophoto}{Jean Auriol}
Jean Auriol is a PhD student at Centre Automatique et Systèmes of MINES ParisTech, part of PSL Research University, under the direction of Florent Di Meglio.  His current research interests include control and estimation design for hyperbolic PDEs 
\end{IEEEbiographynophoto}

\begin{IEEEbiographynophoto}{Ulf Jakob F. Aarsnes}
Ulf Jakob F. Aarsnes works as a PostDoc at the International Research Institute of Stavanger (IRIS) in the DrillWell Project. He graduated from the Norwegian University of Science and Technology with a MSc in 2012 and defended his PhD in 2016, both in the field of Engineering Cybernetics. His research interests include modelling, analysis and control of distributed parameter systems and their industrial applications. Aarsnes has authored and/or coauthored more than 20 technical papers in this field.
\end{IEEEbiographynophoto}

\begin{IEEEbiographynophoto}{Philippe Martin}
	is a Senior Researcher at MINES ParisTech, PSL Research University, France. He received in 1992 the PhD degree in 
	Mathematics and Control from Ecole des Mines de Paris (the former name of MINES ParisTech). His interests include 
	theoretical aspects of control theory (in particular nonlinear control, observers, and partial differential equations),
	as well as industrial control problems in several fields of engineering (in particular electrical machines, mechatronics,
	and aerial vehicles).
\end{IEEEbiographynophoto}

\begin{IEEEbiographynophoto}{Florent Di Meglio}
Florent Di Meglio is tenured professor at the Centre Automatique et Systèmes of MINES ParisTech, part of PSL Research University. He received his Ph.D. from the same university in Mathematics and Control in 2011, and was a Posdoctoral Researcher at UC San Diego from 2011 to 2012. His current research interests include control and estimation design for hyperbolic PDEs, with application to process control, most notably multiphase flow control and oil drilling. 
\end{IEEEbiographynophoto}

\end{document}